\newtheorem{thmx}{Theorem}
\newtheorem{thm}{Theorem}[section]
\newtheorem{lem}[thm]{Lemma}
\newtheorem{prop}[thm]{Proposition}
\newtheorem{cor}[thm]{Corollary}
\theoremstyle{definition}
\newtheorem{defn}[thm]{Definition}
\newtheorem{ex}[thm]{Example}
\newtheorem{rem}[thm]{Remark}
\DeclareMathOperator{\R}{\mathbb R}
\DeclareMathOperator{\Af}{\mathbb A}
\DeclareMathOperator{\C}{\mathbb C}
\DeclareMathOperator{\N}{\mathbb N}
\DeclareMathOperator{\V}{\mathcal V}
\DeclareMathOperator{\SR}{\mathcal K^0}
\DeclareMathOperator{\SO}{\mathcal O}
\DeclareMathOperator{\K}{\mathcal K}
\DeclareMathOperator{\QQ}{\mathbb Q}
\DeclareMathOperator{\PP}{\mathbb P}
\DeclareMathOperator{\car}{char}
\DeclareMathOperator{\Reg}{Reg}
\DeclareMathOperator{\Sp}{Spec}
\DeclareMathOperator{\p}{\mathfrak{p}}
\DeclareMathOperator{\q}{{\mathfrak q}}
\DeclareMathOperator{\Mor}{\rm Mor}
\DeclareMathOperator{\Max}{\rm Max}
\DeclareMathOperator{\BAB}{B\otimes_{A} B}
\DeclareMathOperator{\CAC}{C\otimes_{A} C}
\DeclareMathOperator{\cAc}{1\otimes_A c ~-~ c\otimes_A 1}
\def\NilRad {{\rm{NilRad}}}
\def\JRad {{\rm{Rad}}}
\begin{document}

\title[\tiny{Algebraic characterizations of homeomorphisms between algebraic varieties}]{Algebraic characterizations of homeomorphisms between algebraic varieties}
\author{Fran\c cois Bernard, Goulwen Fichou, Jean-Philippe Monnier and Ronan Quarez}
\thanks{The second author wishes to thank Olivier Wittenberg for fruitful discussions. The authors have received support from the Henri Lebesgue Center ANR-11-LABX-0020-01 and the project EnumGeom ANR-18-CE40-0009.}

\address{François Bernard\\
   Univ Angers, CNRS, LAREMA, SFR MATHSTIC, F-49000 Angers, France}
\email{francois.bernard@univ-angers.fr}

\address{Goulwen Fichou\\
Univ Rennes, CNRS, IRMAR - UMR 6625, F-35000 Rennes, France}
\email{goulwen.fichou@univ-rennes1.fr}

\address{Jean-Philippe Monnier\\
   Univ Angers, CNRS, LAREMA, SFR MATHSTIC, F-49000 Angers, France}
\email{jean-philippe.monnier@univ-angers.fr}

\address{Ronan Quarez\\
Univ Rennes\\
Campus de Beaulieu, 35042 Rennes Cedex, France}
\email{ronan.quarez@univ-rennes1.fr}
\date\today
\subjclass[2020]{14A10,13B22,14P99}
\keywords{homeomorphisms of algebraic varieties, weak normalization, seminormalization, saturation, regulous functions, real closed fields}

\begin{abstract} We address the question of finding algebraic properties that are respectively equivalent, for a morphism between algebraic varieties over an algebraically closed field of characteristic zero, to be an homeomorphism for the Zariski topology and for a strong topology that we introduce. Our answers involve a study of seminormalization and saturation for morphisms between algebraic varieties, together with an interpretation in terms of continuous rational functions on the closed points of an algebraic variety. The continuity refers to the strong topology which is the usual Euclidean topology in the complex case, whereas it comes from the theory of real closed fields otherwise.
\end{abstract}

\maketitle

\vskip 15mm

Let $k$ be an algebraically closed field. Let $\pi:Y\to X$ be a morphism between algebraic varieties over $k$ and $\pi_k:Y(k)\to X(k)$ be its restriction to the closed points. The main purpose of this paper is to find algebraic characterizations for topological conditions on $\pi$ or $\pi_k$.
In this direction, we compare bijections, isomorphisms and homeomorphisms with respect to the Zariski topology, and to a strong topology on the closed points when char($k$)=0 (corresponding to the Euclidean topology when $k=\C$ is the field of complex numbers).

As first comparisons, recall from the Nullstellensatz that $\pi$ is a homeomorphism if and only if $\pi_k$ is a homeomorphism. Also, it is clear that if $\pi$ is an isomorphism, then it is a homeomorphism and in particular $\pi_k$ is a bijection. However, in general, having a bijection at the level of closed points does not induce a homeomorphism or an isomorphism between the varieties. To obtain results of this kind, one must add some conditions on the varieties. For example, if $\pi_k$ is a bijection and $X$,$Y$ are irreducible curves, then $\pi$ is a homeomorphism. In greater dimension, a bijection (or a birational bijection in positive characteristics) between irreducible varieties induces an isomorphism when the target variety is normal by Zariski Main Theorem. 

Assuming now $\pi$ to be a homeomorphism, there are similar results involving the notions of seminormality and weak normality instead of normality. Andreotti and Bombieri \cite{AB} proved that $\pi$ is an isomorphism if $X$ is weakly normal and $\pi$ is  finite. Vitulli \cite{V2} managed to remove the finiteness assumption on $\pi$, by requiring that $X$ does not have one-dimensional components. This dimensional condition is necessary, as illustrated by the normalization of a nodal curve with one of the preimage of the singular point removed : we obtain a homeomorphism onto a seminormal curve which is not an isomorphism. The dimensional condition of Vitulli guaranties in fact that the homeomorphism is a finite morphism.

The weak normality property appearing in the results of Andreotti, Bombieri and Vitulli is very closed to the notion of seminormality and they are both related to the notions of subintegral and weakly subintegral morphisms. A morphism $\pi:Y\to X$ is (resp. weakly) subintegral if it is integral, bijective and equiresidual (resp. residually purely inseparable) i.e. it induces isomorphisms (resp. purely inseparable extensions) between the residue fields. The seminormalization (resp. weak normalization) of $X$ in $Y$, introduced by Bombieri and Andreotti, is the maximal variety with a (resp. weakly) subintegral morphism onto $X$ which factorizes $\pi$. Note that when $\car(k)=0$, weakly subintegral means subintegral and weak normality means seminormality.
The notions of weak normality and seminormality were first introduced by Andreotti and Norguet \cite{AN} for complex analytic varieties, then by Andreotti and Bombieri \cite{AB} for schemes and by Traverso for rings \cite{T}. It appears in the study of Picard groups \cite{T} or as singularities in the minimal model program \cite{KoKo}. Seminormalization, weak normalization and (weakly) subintegral morphisms are studied in section \ref{sect-semi}. The close notion of radicial morphism, as introduced by Grothendieck \cite{Gr1}, only requires that the (non necessarily integral) morphism is injective and residually purely inseparable. We study in section \ref{sect-sat} the saturation for varieties as the geometric counterpart of radiciality. The saturation appears first in the context of Lipschitz geometry with works of Pham and Teissier \cite{PT} in complex analytic geometry and Lipman \cite{Lip} for ring extensions. For integral morphisms,  saturation and weak normalization coincide, providing a different approach to weak normality as proposed by Manaresi \cite{Mana}. However, it is not established that the saturation produces a variety, contrarily to the weak normalization and the seminormalization. A comparaison between all these notions is done in section \ref{comparaison}.

It will be crucial in our discussions to consider another topology than the Zariski topology.
For an algebraic variety $X$ over the complex numbers, the strong topology on the closed points $X(\C)$ of $X$ comes from the isomorphism $\R[\sqrt{-1}]=\C$ that gives an identification $\C\simeq \R^2$ and the property of $\R$ to be a real closed field. Indeed, the Euclidean topology on $\R^n$ has a basis of open sets given by semialgebraic subsets of $\R^n$, i.e. given by real polynomial equalities and inequalities. The theory of semialgebraic sets provides an algebraic way to discuss about topological question in real algebraic geometry, as developed in \cite{BCR}. The great advantage of this approach is that it generalizes from $\R$ to any real closed field. A real closed field is an ordered field that does not admit any ordered algebraic extension. Equivalently, adding a square root of $-1$ to a real closed field gives an algebraically closed field of characteristic zero. Real closed fields have been initially studied by Artin and Schreier \cite{AS} in the way of Artin's proof of Hilbert XVIIth Problem \cite{A}. The most basic examples away from $\R$ are the field of algebraic real numbers, which is the real closure of $\QQ$, and the field of Puiseux series with real coefficients, which is the real closure of the field $\R((T))$ of Laurent series ordered by $T$ positive and infinitely small. There are many of them, as illustrated by the fact that any algebraically closed field $k$ of characteristic zero contains (infinitely many) real closed subfields $R\subset k$ with $k=R[\sqrt{-1}]$. Fixing such a choice of $R$ leads to an identification $k\simeq R^2$ and equips $k$ with an order topology, called the $R$-topology on $k$. Note that in general $R$ is not connected, and a closed and bounded interval is not compact. Anyway, for a given algebraic variety $X$ over an algebraically closed field $k$ of characteristic zero, the choice of a real closed field $R\subset k$ such that $k=R[\sqrt{-1}]$ makes $X(k)$ a topological space for the $R$-topology. Of course, if $k=\C$ and $R=\R$ then we recover the Euclidean topology. The $R$-topology on the closed points of algebraic varieties over an algebraically closed field $k$ of characteristic zero is studied in section \ref{regulous}. The introduction of the $R$-topology allows us to characterize finite morphisms which are homeomorphisms via subintegrality and radiciality. Concerning subintegrality, it generalizes to any algebraically closed field of characteristic zero a result of the first author \cite[Thm. 3.1]{Be} in complex geometry.

\begin{thmx}\label{thmA} Let $\pi:Y\to X$ be a finite morphism between algebraic varieties over an algebraically closed field $k$ of characteristic zero. Let $R\subset k$ be a real closed subfield such that $k=R[\sqrt{-1}]$.
The following properties are equivalent :
\begin{enumerate}
\item[(i)] $\pi$ is a homeomorphism.
\item[(ii)] $\pi_k$ is a homeomorphism for the $R$-topology.
\item[(iii)] $\pi$ is subintegral.
\item[(iv)] $\pi$ is radicial.
\end{enumerate}
\end{thmx}

We focus in section \ref{sect-homeo} on the relations between the four equivalent properties appearing in Theorem \ref{thmA} when we remove the finiteness hypothesis. Note that the first two properties are topological whereas the last ones are algebraic. The equivalence between the four above properties is no longer true without the finiteness hypothesis. In particular, a homeomorphism with respect to the Zariski topology need not be a homeomorphism with respect to the $R$-topology, even for irreducible affine curves. Our interpretation is that the relevant topology to associate to subintegrality is the $R$-topology, whereas Zariski topology is rather related to the notion of radiciality. 

\begin{thmx}\label{thmB} Let $\pi:Y\to X$ be a morphism between algebraic varieties over an algebraically closed field $k$ of characteristic zero. Let $R\subset k$ be a real closed subfield such that $k=R[\sqrt{-1}]$.
Then :
\begin{enumerate}
\item[(i)] $\pi_k$ is a homeomorphism for the $R$-topology if and only if $\pi$ is subintegral.
\item[(ii)] If $\pi$ is a homeomorphism then $\pi$ is radicial.
\end{enumerate}
\end{thmx}

The key result we prove to get the first statement of Theorem \ref{thmB} is that a homeomorphism for the $R$-topology is always finite. Note moreover that, if the second part of Theorem \ref{thmB} does not refer to the $R$-topology, our proof is a consequence of the first result where the use of the $R$-topology is crucial. We prove along the way that a homeomorphism with respect to the $R$-topology is a homeomorphism, the converse being true except for curves. In section \ref{sect-carpos} we consider the situation where $k$ has positive characteristic, and then the $R$-topology does not exist anymore. We provide an alternative proof of the second statement of Theorem \ref{thmB} in this context. As consequences, we completely answer the question considered by Vitulli \cite{V2} of characterizing when a homeomorphism is an isomorphism. 

\begin{thmx}\label{thmC} Let $\pi:Y\to X$ be a morphism between algebraic varieties over an algebraically closed field $k$ of characteristic zero. Let $R\subset k$ be a real closed subfield such that $k=R[\sqrt{-1}]$.
\begin{enumerate}
\item[(i)] Assume $\pi_k$ is a homeomorphism for the $R$-topology. Then, $\pi$ is an isomorphism if and only if $X$ is seminormal in $Y$.
\item[(ii)] Assume $\pi$ is a homeomorphism. Then, $\pi$ is an isomorphism if and only if $X$ is saturated in $Y$.
\end{enumerate}
\end{thmx}

The study of seminormalization over non algebraically closed fields presents some difficulty, and the last three authors managed to define a sort of seminormalization for algebraic varieties over the field of real numbers \cite{FMQ}. This notion has to do with the central points of a real algebraic variety, that is the Euclidean closure of the set of regular points. This approach would not have been possible without the recent study of continuous rational functions in real algebraic geometry, as initiated by Kucharz \cite{Ku}, Koll\'ar and Nowak \cite{KN}, and further developed in \cite{FHMM} as regulous functions. These regulous functions happens to be related to seminormality for complex algebraic varieties too as studied by the first author \cite{Be}. It is this approach of seminormality via continuous functions with respect to the $R$-topology we develop at the end of section \ref{sect-CR}.
In particular, we provide a full study of the relation between seminormality and the $R$-topology completely parallel to the complex case in \cite{Be}. We introduce the continuous rational functions over $k$ in section \ref{sect-CR}. A remarkable fact is that the continuity of a rational function defined on an algebraic variety $X$ over $k$ does not depend on the choice of the real closed field $R$. Even more, these continuous rational functions coincide with the regular functions on the seminormalization of $X$, cf. Theorem \ref{caractK0}. As a consequence, fixing a real closed field $R\subset k$ brings all the flexibility of semialgebraic geometry over $R$ to algebraic geometry over $k$, without loosing in generality. 
We prove notably that the seminormalization determines a variety up to biregulous equivalence. As a consequence, we obtain the following result, which goes in the direction of the problems considered by Koll\'ar in \cite{Ko} (or \cite{KMOS,Ce}).

\begin{thmx}\label{thmD}
\label{thmD}
Let $X$ and $Y$ be seminormal algebraic varieties over an algebraically closed field $k$ of characteristic zero. If $X(k)$ and $Y(k)$ are biregulously equivalent, then $X$ and $Y$ are isomorphic.
\end{thmx}

\vskip 5mm

In the paper, $k$ denotes a field (sometimes algebraically closed) of any characteristic, and an algebraic variety over $k$ is a reduced and separated scheme of finite type over $k$.

\section{Subintegrality, weak normalization and seminormalization}\label{sect-semi}

After some reminders on integral extensions and normalization, we recall the notions of (weakly) subintegral extensions and the respective constructions of Traverso \cite{T}, Andreotti and Bombieri \cite{AB} of the seminormalization and the weak normalization for ring extensions and morphisms between algebraic varieties. 
\vskip 5mm

{\bf Notation and terminology.}
 Let $A$ be a ring. The Zariski spectrum $\Sp A$ of $A$ is the set of prime ideals of $A$. It is a topological space for the topology whose closed sets are generated by the sets $\V(f)=\{\p\in\Sp A\mid 
f\in\p\}$ for $f\in A$. We denote by $\Max A$ the subspace of maximal ideals of $A$. For $\p\in\Sp A$, we denote by $k(\p)$ the residue field at $\p$. 

Let $(X,\SO_X)$ be a variety over $k$.
For $x\in X$ we denote by $k(x)$ the residue field at $x$; for an affine neighborhood $U$ of $x$ then $x$ corresponds to a prime ideal $\p_x$ of $\SO_X(U)$ and we have $k(x)=k(\p_x)$. In case $X$ is affine then we denote by $k[X]$ the coordinate ring of $X$ i.e $k[X]=\SO_X(X)$.
Let $K$ be a field containing $k$. We denote by $X(K)$ the set $\Mor(\Sp K,X)$ of $K$-rational points. 
If $K=k$ then $X(k)$ is also the set of $k$-closed points of $X$, i.e the points of $X$ with residue field equal to $k$. We have thus an inclusion $$X(k)\hookrightarrow X$$ that makes $X(k)$ a topological space for the Zariski topology. We denote by $\SO_{X(k)}$ the sheaf of regular functions on $X(k)$, for $x\in X(k)$ we have $\SO_{X,x}=\SO_{X(k),x}$. 
In the case $k$ algebraically closed, for an open subset $U$ of $X$, we may identify $\Max \SO_X(U)$ with $U(k)$ by the Nullstellensatz, and similarly we identify the regular functions on
$U$ with those on $U(k)$, namely $\SO_X(U)=\SO_{X(k)}(U(k))$. If $T$ is a subset of $X$ or $X(k)$ or $\Sp A$ then we will denote by $\overline{T}^Z$ the closure of $T$ for the Zariski topology.

A ring extension $i:A\to B$ induces a map $\Sp(i):\Sp B\to \Sp A$, 
given by $\p\mapsto (\p\cap A)=i^{-1}(\p)$.
If $\pi:Y\rightarrow X$ is a morphism between
algebraic varieties over $k$, with $\SO_X\to \pi_*\SO_Y$ the associated morphism of sheaves of rings on $X$, then for any open subset $U\subset X$ the ring morphism $\SO_X(U)\to \SO_Y(\pi^{-1}(U))$ is an extension if $\pi$ is dominant. For a field extension $k\to K$, we denote by $\pi_{K}:Y(K)\to X(K)$ the induced map. Remark that $\pi_k$ is also the restriction of $\pi$ to the $k$-closed points.

\vskip 2mm

In the sequel, $\K(A)$ (resp. $\K$) will denote the total ring of fractions of $A$ (resp. the sheaf of total ring of fractions on $X$).

\subsection{Reminder on integral extensions and normalization}

A ring extension $A\to B$ is said of finite type (resp. finite) if it makes $B$ a finitely generated $A$-algebra (resp. $A$-module). The extension $A\to B$ is
birational if it induces an isomorphism between 
$\K(A)$ and $\K(B)$.

An
element $b\in B$ is integral over $A$ if $b$ is the root of a monic
polynomial with coefficients in $A$, which is equivalent for $A[b]$ to be a finite $A$-module by \cite[Prop. 5.1]{AM}. As a consequence  
$$A_B'=\{b\in B|\,b\,\, {\rm is\,\,
  integral\,\, over}\,\,A\}$$ is a ring called the integral closure of $A$ in
$B$. The extension $A\to B$ is said to be integral if $A_B'=B$. In
case $B=\K(A)$ then the ring $A_{\K(A)}'$ is
denoted by $A'$ and is simply called the integral closure of $A$.
The ring $A$ is called integrally closed (resp. in $B$) if
$A=A'$ (resp. $A=A_B'$).

We recall that a dominant morphism $Y\rightarrow X$ between 
algebraic varieties over $k$ is said of finite type (resp. finite, birational,  integral) if for any open subset $U\subset X$ the ring extension
$\SO_X(U)\rightarrow \SO_Y(\pi^{-1}(U))$ is of finite type (resp. finite, birational, integral). In this paper, a morphism between algebraic varieties is always of finite type.

Let $X$ be an algebraic variety over $k$. The normalization of $X$, denoted by $X'$, is the algebraic variety over $k$ with a finite birational morphism $\pi':X'\rightarrow X$, called the normalization morphism such that for any open subset $U\subset X$ we have $\SO_{X'} (\pi'^{-1}(U))=\SO_X(U)'$.
We say that $X$ 
is normal if $\pi'$ is an isomorphism. A point $x\in X$ is said normal if $\SO_{X,x}$ is integrally closed.

\vskip 2mm

We will use frequently that an integral extension of rings induces surjectivity at the spectrum level (See \cite[Thm. 9.3]{Ma} or \cite[Thm. 5.10, Cor. 5.8]{AM}).
\begin{prop}
\label{lying-over}
Let $A\to B$ be an integral extension of rings.
The maps $\Sp B\to \Sp A$ and $\Max B\to \Max A$ are surjective and closed.
\end{prop}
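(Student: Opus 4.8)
The plan is to reduce both the surjectivity and the closedness to a single algebraic fact: for an integral extension of integral domains $A'\subseteq B'$, the ring $A'$ is a field if and only if $B'$ is a field. I would prove this crucial lemma first by elementary manipulation of integral dependence relations. If $A'$ is a field and $b\in B'$ is nonzero, a monic relation $b^n+a_{n-1}b^{n-1}+\cdots+a_0=0$ of minimal degree forces $a_0\neq 0$ (since $B'$ is a domain), and then $b^{-1}=-a_0^{-1}(b^{n-1}+\cdots+a_1)\in B'$; conversely, if $B'$ is a field and $a\in A'$ is nonzero, writing a monic relation for $a^{-1}\in B'$ and multiplying through by a suitable power of $a$ exhibits $a^{-1}$ as an element of $A'$.

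From this lemma I would record at once the well-definedness of $\Max B\to\Max A$: if $\mathfrak n$ is maximal in $B$, then $A/(\mathfrak n\cap A)\hookrightarrow B/\mathfrak n$ is an integral extension of domains whose target is a field, so by the lemma $\mathfrak n\cap A$ is maximal in $A$.

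The heart of the argument, and the step I expect to be the main obstacle, is the surjectivity of $\Sp B\to\Sp A$ (Lying Over). Given $\mathfrak p\in\Sp A$, I would localize at $S=A\setminus\mathfrak p$ to replace the extension by the integral extension $A_{\mathfrak p}\to S^{-1}B$; since primes of $S^{-1}B$ contracting to $\mathfrak p A_{\mathfrak p}$ correspond to primes of $B$ contracting to $\mathfrak p$, it suffices to find a prime of $S^{-1}B$ lying over the maximal ideal $\mathfrak m=\mathfrak p A_{\mathfrak p}$ of the local ring $A_{\mathfrak p}$. For this it is enough to show $\mathfrak m\,S^{-1}B\neq S^{-1}B$, since any maximal ideal of $S^{-1}B$ containing $\mathfrak m\,S^{-1}B$ then contracts to $\mathfrak m$ by maximality. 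Assuming $\mathfrak m\,S^{-1}B=S^{-1}B$, I would write $1=\sum m_i b_i$ and observe that the finitely generated $A_{\mathfrak p}$-subalgebra $M=A_{\mathfrak p}[b_1,\dots,b_r]$ is a finite $A_{\mathfrak p}$-module (finitely generated plus integral) satisfying $\mathfrak m M=M$; Nakayama's lemma then gives $M=0$, contradicting $1\in M$ (and $S^{-1}B\neq 0$, which holds because $A_{\mathfrak p}\hookrightarrow S^{-1}B$).

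Closedness of $\Sp B\to\Sp A$ then follows formally: for an ideal $J\subseteq B$, the extension $A/(J\cap A)\hookrightarrow B/J$ is integral, so applying Lying Over to this quotient extension shows that the image of $\V(J)$ is exactly $\V(J\cap A)$, which is closed. Finally, surjectivity and closedness of $\Max B\to\Max A$ follow by combining Lying Over with the field lemma: any prime $\mathfrak q$ lying over a maximal ideal $\mathfrak m$ of $A$ is itself maximal, because $A/\mathfrak m\hookrightarrow B/\mathfrak q$ is integral with source a field, forcing $B/\mathfrak q$ to be a field. This yields surjectivity of $\Max B\to\Max A$, and intersecting the image computation above with $\Max A$ yields its closedness.
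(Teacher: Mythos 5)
Your proof is correct and complete. Note that the paper gives no argument of its own for this proposition: it simply cites \cite{Ma} and \cite{AM}, and what you have written is essentially the standard proof from those references (the field lemma for integral extensions of domains, localization plus Nakayama for Lying Over, passage to the quotient extension $A/(J\cap A)\hookrightarrow B/J$ for closedness, and the maximality transfer for the statement on $\Max$). One small simplification worth knowing: at the Lying Over step, instead of invoking Nakayama you can take \emph{any} maximal ideal $\mathfrak n$ of $S^{-1}B$ (which is nonzero, as you observe); since $A_{\mathfrak p}\to S^{-1}B$ is integral, your field lemma shows $\mathfrak n\cap A_{\mathfrak p}$ is maximal, hence equal to $\mathfrak p A_{\mathfrak p}$ because $A_{\mathfrak p}$ is local --- this is exactly how the cited proof in \cite{AM} proceeds.
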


As a consequence, if $\pi$ is a finite morphism between
algebraic varieties over $k$
then Proposition \ref{lying-over} implies that $\pi$ and $\pi_{k}$ are surjective.

\subsection{Subintegral extensions, weak normalization and seminormalization}

We recall the concept of subintegral and weakly subintegral extensions introduced respectively by Traverso \cite{T}, Andreotti and Bombieri \cite{AB}.

\begin{defn}
Let $A\to B$ be an extension of rings.
\begin{enumerate}
\item For $\p\in\Sp B$, we say that $\Sp B\to\Sp A$ is equiresidual (resp. residually purely inseparable) at $\p$ if the extension $k(\p\cap A)\to k(\p)$ is an isomorphism (resp. purely inseparable).\\ 
Let $W\subset \Sp B$, we say that $\Sp B\to\Sp A$ is equiresidual  (resp. residually purely inseparable) by restriction to $W$ if for any $\p\in W$, $\Sp B\to\Sp A$ is equiresidual (resp. residually purely inseparable) at $\p$. If $W=\Sp B$ then we simply say equiresidual (resp. residually purely inseparable).\\
The extension $A\to B$ is said equiresidual (resp. residually purely inseparable) if $\Sp B\to \Sp A$ is.
\item The extension $A\to B$ and the map $\Sp B\to \Sp A$ are said (resp. weakly) subintegral if the extension is integral and $\Sp  B\to\Sp A$ is bijective and equiresidual (resp. residually purely inseparable).
\end{enumerate}
\end{defn}

Note that a field extension is equiresidual (resp. residually purely inseparable) if and only if it is an isomorphism (resp. purely inseparable). Remark that a subintegral extension is weakly subintegral and that the converse holds in characteristic zero.
We extend these definitions to the geometric setting, adding moreover a notion of hereditarily birational morphism that have been introduced in the real setting in \cite{FMQ}.

\begin{defn}
Let $\pi:Y\to X$ be a dominant morphism between
algebraic varieties over $k$. 
\begin{enumerate}
\item We say that $\pi$ is equiresidual (resp. residually purely inseparable) if for any $y\in Y$ then the field extension $k(\pi(y))\to k(y)$ is an isomorphism (resp. purely inseparable).
\item We say that $\pi$ is (resp. weakly) subintegral if $\pi$ is integral, bijective and equiresidual (resp. residually purely inseparable).
\item We say that $\pi:Y\to X$ is hereditarily birational if for any open subset $U\subset X$ and 
for any irreducible algebraic subvariety $V=\V(\p)\simeq\Sp(\SO_Y(\pi^{-1}(U))/\p)$ in $\pi^{-1}(U)$, the
morphism $$\pi_{|V}: V\to W=\V(\p\cap \SO_X(U))\simeq\Sp\big(\SO_X(U)/(\p\cap \SO_X(U))\big)$$ is 
birational. 
\end{enumerate}
\end{defn}

Geometrically speaking, a dominant morphism $\pi:Y\to X$ is equiresidual if and only if it is hereditarily birational. Indeed, for any open subset $U\subset X$ and for any irreducible algebraic subvariety $V=\V(\p)\simeq\Sp(\SO_Y(\pi^{-1}(U))/\p)$ in $\pi^{-1}(U)$, the restricted morphism $$\pi_{|V}: V\to W=\V(\p\cap \SO_X(U))\simeq\Sp \big( \SO_X(U)/(\p\cap \SO_X(U))\big)$$ is 
birational if and only if the extension $k(\p\cap \SO_X(U))=\K(W)\to k(\p)=\K(V)$ is an isomorphism.

An hereditarily birational morphism is not necessarily bijective. However, adding an integrality assumption and using Proposition \ref{lying-over}, we get the following characterization.

\begin{lem}
% \label{corheredbirat=equiresid}
Let $\pi:Y\to X$ be an integral morphism between
algebraic varieties over $k$. The following properties are equivalent:
\begin{enumerate}
 \item $\pi$ is hereditarily birational and injective.
 \item $\pi$ is subintegral.
\end{enumerate}
\end{lem}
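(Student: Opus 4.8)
The plan is to exploit the two facts established just above: that a dominant morphism is equiresidual if and only if it is hereditarily birational, and that integrality forces surjectivity through Proposition~\ref{lying-over}. Both implications then reduce to bookkeeping among the three properties defining subintegrality (integral, bijective, equiresidual).

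The implication $2)\Rightarrow 1)$ is immediate. If $\pi$ is subintegral, then it is integral, bijective and equiresidual; being bijective it is in particular injective, and being equiresidual it is hereditarily birational by the geometric characterization recalled just before the lemma. Hence $\pi$ is hereditarily birational and injective.

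For $1)\Rightarrow 2)$, assume $\pi$ is hereditarily birational and injective. First, hereditary birationality is equivalent to equiresiduality, again by the characterization preceding the lemma, so it only remains to upgrade injectivity to bijectivity. Here I would use that the integrality hypothesis of the lemma, combined with the standing convention that every morphism is of finite type, makes $\pi$ a finite morphism, since an integral ring extension of finite type is finite; by the consequence of Proposition~\ref{lying-over} noted above, a finite morphism is surjective, and together with the assumed injectivity this yields bijectivity of $\pi$. The one point to keep consistent is the level at which injectivity and surjectivity are read: injectivity is hypothesized for $\pi$ on the underlying spaces (equivalently on the prime spectra over affine charts), and the surjectivity supplied by lying-over is exactly of this kind, so the two combine to genuine bijectivity of $\pi$. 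Thus $\pi$ is integral, bijective and equiresidual, i.e. subintegral.

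I expect no serious obstacle here: the mathematical substance sits entirely in the two auxiliary results (the equiresidual/hereditarily birational dictionary and the lying-over property of Proposition~\ref{lying-over}), and the argument is a matter of assembling them while matching the definitions of injective, surjective and bijective at the same level.
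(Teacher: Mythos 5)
Your proof is correct and is essentially the argument the paper intends: the lemma is stated there without an explicit proof, the surrounding text pointing exactly to the equiresidual/hereditarily-birational dictionary and to the surjectivity supplied by Proposition~\ref{lying-over}, which is precisely what you assemble. The only minor simplification available is that your detour through finiteness is unnecessary, since Proposition~\ref{lying-over} gives surjectivity directly for integral extensions.
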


The notion of (resp. weak) subintegral extension leads to the notion of seminormalization (resp. weak normalization), in a similar way that integral extensions lead to normalization.

In order to define the notions of seminormality and weak normalization, we need to consider sequences of ring extensions.
A ring $C$ is said intermediate between the rings $A$ and $B$ if there exists a sequence of extensions $A\to C\to B$. In that case, we say that $A\to C$ and $C\to B$ are intermediate extensions of $A\to B$ and we say in addition that $A\to C$ is a subextension of $A\to B$.

Seminormal (resp. weakly normal) extensions are maximal (resp. weakly) subintegral extensions.
\begin{defn}
\label{defseminorm}
Let $A\rightarrow C\to B$ be a sequence of two extensions of rings with $A\to C$ (resp. weakly) subintegral.
We say that $C$ is seminormal (resp. weakly normal) between $A$ and $B$ 
  if for every
  intermediate ring
  $D$ between $C$ and $B$, with $C$ different from $D$, then $A\to D$ is not (resp. weakly)
  subintegral. 
  
  We say that $A$ is seminormal (resp. weakly normal) in $B$ if $A$ is
  seminormal (resp. weakly normal) between $A$ and $B$. We say that $A$ is seminormal (resp. weakly normal) if $A$ is seminormal (resp. weakly normal) between $A$ and $A'$.
\end{defn}

Recall that the characteristic exponent $e(K)$ of a field $K$ is 1 if $\car (K) = 0$
and is $p$ if $\car (K) = p > 0$.
Given an extension of rings $A\to B$, Traverso  \cite{T}, Andreotti and Bombieri \cite{AB} (see also
\cite{V}) proved respectively there exists a unique intermediate ring which is seminormal (resp. weakly normal) between $A$ and $B$. To this purpose, they introduced the rings
$$A_B^+=\{b\in A_B'|\,\,\forall\p\in\Sp
A,\,\,b_{\p}\in A_{\p}+\JRad((A_B')_{\p})\},$$
$$A_B^*=\{b\in A_B'|\,\,\forall\p\in\Sp
A,\exists n\geq 0,\,\,b_{\p}^{e(k(\p))^n}\in A_{\p}+\JRad((A_B')_{\p})\},$$
where $\JRad$ stands for the Jacobson radical, namely the intersection of all the maximal ideals. The idea to build $A_B^+$ and $A_B^*$ is, for all $\p\in\Sp A$, to glue together all the prime ideals of $A_B'$ lying over $\p$ (see \cite{Mnew}).

\begin{thm}\label{thmT} \cite{T}, \cite{AB}\\
Let $A\to B$ be an extension of rings. Then $A_B^+$ (resp. $A_B^*$) is the unique ring which is seminormal (resp. weakly normal) between $A$ and $B$. 

Moreover, for any intermediate ring $C$ between A and B, the extension $ A\to C$ is (resp. weakly) subintegral if and only if $C\subset A^+_B$ (resp. $C\subset A_B^*$).
\end{thm}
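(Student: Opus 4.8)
The plan is to prove the ``Moreover'' characterization first, as both the existence (seminormality of $A_B^+$) and the uniqueness then follow formally. As a preliminary I would check that $A_B^+$ is a subring of $A_B'$ containing $A$: for each $\p\in\Sp A$ the subset $A_\p+\JRad((A_B')_\p)$ is a subring of $(A_B')_\p$, since $\JRad((A_B')_\p)$ is an ideal and $A_\p$ a subring, and $A_B^+$ is precisely the intersection over all $\p$ of the preimages of these subrings under the localization maps $A_B'\to(A_B')_\p$. In particular $A\to A_B^+$ is integral because $A_B^+\subseteq A_B'$.

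The core of the proof, and the step I expect to be the main obstacle, is to show that $A\to A_B^+$ is itself subintegral, i.e. that $\Sp A_B^+\to\Sp A$ is bijective and equiresidual. Surjectivity is immediate from Proposition \ref{lying-over}. For the rest I would fix $\p\in\Sp A$ and localize at $A\setminus\p$. Every prime $\q$ of $A_B'$ lying over $\p$ then corresponds to a prime of $(A_B')_\p$ over the maximal ideal $\p A_\p$, hence is \emph{maximal} in $(A_B')_\p$ by integrality, and so contains $\JRad((A_B')_\p)$. Writing $b_\p=a+j$ with $a\in A_\p$ and $j\in\JRad((A_B')_\p)$ for an arbitrary $b\in A_B^+$, the radical part $j$ lies in every such $\q$, so the condition $b\in\q$ depends only on $a\in A_\p$ and is therefore the same for all primes of $A_B'$ over $\p$. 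This forces all these primes to contract to a single prime of $A_B^+$ (bijectivity of the fibre), and the same decomposition $b=a+j$ shows that $A\to A_B^+/\q$ is surjective, whence $k(\p)\to k(\q)$ is an isomorphism (equiresiduality). The delicate point is the localization bookkeeping: keeping track of which primes become maximal, and verifying that the residue field map is a genuine isomorphism and not merely surjective.

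Granting that $A\to A_B^+$ is subintegral, the two implications of the characterization follow. If $C$ is intermediate with $C\subseteq A_B^+$, then $A\to C$ is integral, and factoring the bijection $\Sp A_B^+\to\Sp A$ as $\Sp A_B^+\to\Sp C\to\Sp A$, where both maps are surjective by Proposition \ref{lying-over}, forces $\Sp C\to\Sp A$ to be bijective; a short diagram chase on the residue field inclusions $k(\p)\hookrightarrow k(\q)\hookrightarrow k(\mathfrak Q)$, whose composite is an isomorphism, yields equiresiduality, so $A\to C$ is subintegral. Conversely, if $A\to C$ is subintegral, I would localize at an arbitrary $\p$: subintegrality is preserved under localization, so $A_\p\to C_\p$ is subintegral with $A_\p$ local, which forces $C_\p$ to be local with $\JRad(C_\p)$ its maximal ideal and $C_\p=A_\p+\JRad(C_\p)$ by equiresiduality. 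Since contractions of the maximal ideals of the integral extension $C_\p\to(A_B')_\p$ are maximal, one gets $\JRad(C_\p)\subseteq\JRad((A_B')_\p)$, hence $b_\p\in A_\p+\JRad((A_B')_\p)$ for every $b\in C$ and every $\p$; that is, $C\subseteq A_B^+$.

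Finally, seminormality and uniqueness are formal. For seminormality of $A_B^+$: any intermediate ring $D$ between $A_B^+$ and $B$ with $A\to D$ subintegral satisfies $D\subseteq A_B^+$ by the forward implication, hence $D=A_B^+$. For uniqueness: if $C$ is seminormal between $A$ and $B$, then $A\to C$ subintegral gives $C\subseteq A_B^+$; since $A_B^+$ is intermediate between $C$ and $B$ and $A\to A_B^+$ is subintegral, the maximality built into seminormality of $C$ forces $C=A_B^+$.
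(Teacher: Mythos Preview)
The paper does not give a proof of this theorem: it is stated with attribution to Traverso \cite{T} and no argument is supplied. Your proposal is therefore a reconstruction rather than a comparison, and it follows essentially the classical route of Traverso's original paper---localize at each $\p\in\Sp A$, use that primes of $(A_B')_\p$ over $\p A_\p$ are maximal to see that the defining condition $b_\p\in A_\p+\JRad((A_B')_\p)$ forces a unique contraction and trivial residue extension, and then deduce the characterization and uniqueness formally. The argument is correct.

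Two places deserve an extra sentence of bookkeeping. In the bijectivity step you show that all primes of $A_B'$ lying over $\p$ contract to the same prime of $A_B^+$; to conclude that the fibre of $\Sp A_B^+\to\Sp A$ over $\p$ is a singleton you must also note (via lying-over for the integral extension $A_B^+\hookrightarrow A_B'$) that every prime of $A_B^+$ over $\p$ is such a contraction. In the equiresiduality step, the element $j=b_\p-a$ lies in $(A_B^+)_\p$ and in $\JRad((A_B')_\p)$, and you need that it lies in the unique maximal ideal of $(A_B^+)_\p$; this again follows because that maximal ideal is the contraction of a maximal ideal of $(A_B')_\p$. Both points are implicit in what you wrote and do not affect the validity of the proof.
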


The ring $A^+_B$ (resp. $A_B^*$) is called the seminormalization (resp. weak normalization) of the ring extension $ A\to B$ or the seminormalization (resp. weak normalization) of $A$ in $B$. Note that $$A\subset A_B^+\subset A_B^*\subset A_B^\prime\subset B.$$ The ring $A^+_{A'}$ (resp. $A^*_{A'}$) is called the seminormalization (resp. weak normalization) of $A$ and is simply denoted by $A^+$ (resp. $A^*$).
Note that when $A$ and $B$ are domains, then $A$ and $A_B^+$ have in particular the same fraction field and that $\K(A)\to \K(A_B^*)$ is purely inseparable.

Note that the inclusion $A_B^+\subset A_B^*$ can be strict.
\begin{ex} Let $K$ be a field of characteristic $2$, $x$ be an indeterminate, and consider
the integral extension $A=K[x^2]\to B=K[x]$. 
It follows from a criterion of Hamman that $A_B^+=A$ (see \cite[Ex. 2.13]{V}). Since $x^2$ and $2x= 0$ are both
in $A$ but $x$ is not in $A$, it follows from \cite[Prop. 3.10]{V} that $A$ is not weakly normal in $B$.
\end{ex}

\subsection{Seminormalization and weak normalization of a morphism between algebraic varieties}
  
Andreotti and Bombieri \cite{AB} have introduced and built the seminormalization and the weak normalization of a scheme in another one.
In this section, we provide a different and elementary construction of the seminormalization and the weak normalization of an affine algebraic variety in another one. 
%Let us introduce the problem. Let $Y\to X$ be a dominant morphism of finite type between  algebraic varieties over $k$. Does there exists a unique algebraic variety $Z$ such that $Y\to X$ factorizes through $Z$, satisfying the following property: for any algebraic variety $V$ over $k$ such that $Y\to X$ factorizes through $V$ then $V\to X$ is subintegral if and only if $Z\to X$ factorizes through $V$?
The seminormalization and the weak normalization answer the respective following questions. Let $Y\to X$ be a dominant morphism between algebraic varieties over $k$. Does there exist a biggest algebraic variety $Z$ such that $Y\to X$ factorizes through $Z$ and $Z\to X$ is (resp. weakly) subintegral~?

\vskip 2mm

We recall first the notion of normalization of a variety in another one. 
Let $\pi:Y\to X$ be a dominant morphism between algebraic varieties over $k$. The integral closure $(\SO_X)_{\pi_*(\SO_Y)}'$ of $\SO_X$ in $\pi_*(\SO_Y)$ is a coherent sheaf \cite[Lem. 52.15]{STPmorph} and by 
\cite[II Prop. 1.3.1]{Gr2} it is the structural sheaf of a variety over $k$. 
\begin{defn} \label{defnorm}
Let $\pi:Y\to X$ be a dominant morphism of finite type between algebraic varieties over $k$. 
The variety with structural sheaf equal to the integral closure of  $\SO_X$ in $\pi_*(\SO_Y)$ is called the normalization of
$X$ in $Y$ and is denoted by $X_Y'$.
\end{defn}

Be aware that the normalization of a variety in another one is not necessarily a normal variety, nor it admits a birational morphism onto the original variety.

For a dominant morphism $Y\to X$ between algebraic varieties over $k$, we say that an algebraic variety $Z$ over $k$ is intermediate between $X$ and $Y$ if $Y\to X$ factorizes through $Z$. For affine varieties, it is equivalent to say that $k[Z]$ is an intermediate ring between $k[X]$ and $k[Y]$.
The normalization of a variety in another one satisfies the following property :

\begin{prop} \label{PUnormalization}
Let $Y\to X$ be a dominant morphism between algebraic varieties over $k$.  Let $Z$ be an intermediate variety between $X$ and $Y$.
Then $Z\to X$ is finite if and only if it factorizes $X_Y'\to X$.
\end{prop}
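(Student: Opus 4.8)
The plan is to prove this as a universal property of $X_Y'$, the central point being that $X_Y'\to X$ is a finite morphism. Since $\SO_{X_Y'}=(\SO_X)'_{\pi_*(\SO_Y)}$ is coherent as an $\SO_X$-module, it is finite over $\SO_X$, so $X_Y'\to X$ is finite; moreover the inclusion of sheaves $\SO_{X_Y'}\hookrightarrow\pi_*(\SO_Y)$ gives a dominant morphism $d:Y\to X_Y'$ through which $\pi$ factors. I would work over an affine open $U\subset X$, setting $A=\SO_X(U)$, $B=(\pi_*\SO_Y)(U)$, and $A_B'$ the integral closure of $A$ in $B$; since $X_Y'\to X$ is finite and integral closure commutes with localization, the preimage of $U$ in $X_Y'$ is $\Sp A_B'$. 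Writing $\rho:Z\to X$ and $C=\SO_Z(\rho^{-1}(U))$, the dominance of the factorization $Y\to Z\to X$ together with reducedness gives inclusions $A\hookrightarrow C\hookrightarrow B$.

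For the implication ``$\rho$ finite $\Rightarrow$ factorization'', finiteness makes $\rho$ affine, so $\rho^{-1}(U)=\Sp C$ with $C$ finite, hence integral, over $A$; as $C\subset B$ this forces $C\subset A_B'$, and the inclusion $C\hookrightarrow A_B'$ defines an $X$-morphism $X_Y'|_U\to Z|_U$ compatible with the maps from $Y$. To glue these local morphisms into a global $X$-morphism $X_Y'\to Z$, I would argue that any two $X$-morphisms $q:X_Y'\to Z$ satisfying $q\circ d=(Y\to Z)$ must coincide, since $d$ is dominant and $Z$ is separated, so their equalizer is closed and dense, hence everything. This produces the desired factorization $X_Y'\to Z\to X$ of $X_Y'\to X$.

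For the converse, suppose $X_Y'\to X$ factors as $X_Y'\xrightarrow{q}Z\xrightarrow{\rho}X$ with $q\circ d=(Y\to Z)$. Then $q$ is dominant, because $Y\to Z$ is, and $q$ is proper by the cancellation property, because $X_Y'\to X$ is proper and $\rho$ is separated; a proper dominant morphism being surjective, $q$ is surjective. Descending properness along the surjection $q$ then shows that $\rho$ is proper, and $\rho$ is quasi-finite since each of its fibres is the image under the surjection $q$ of the corresponding finite fibre of $X_Y'\to X$. Being proper and quasi-finite, $\rho$ is finite.

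The main obstacle is the passage from the affine statement to arbitrary varieties. On the commutative-algebra side everything is immediate: finiteness is equivalent to integrality for finite-type extensions, equivalently to containment in $A_B'$, and $C$ is finite over the Noetherian ring $A$ as a submodule of the finite module $A_B'$. The real work is geometric, since in the non-affine setting one cannot read off finiteness of $\rho$ from the rings of sections alone; it is exactly here that I rely on the scheme-theoretic inputs---the cancellation and descent properties for properness, the identity proper $+$ quasi-finite $=$ finite, and the surjectivity of $q$ obtained from properness plus density---to upgrade the local ring-theoretic containment into global finiteness of $\rho$.
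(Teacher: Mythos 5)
Your proof is correct, but there is nothing in the paper to compare it against: the proposition is stated without proof (it is presented as a known property of the relative normalization, and the paper immediately uses it to deduce that $X'_Y\to X$ is finite). The argument the paper implicitly has in mind is the affine one it spells out right afterwards in Lemma \ref{lemintermed}: over an affine open $U\subset X$, with $A=\SO_X(U)$, $B=\SO_Y(\pi^{-1}(U))$, $C=\SO_Z(\rho^{-1}(U))$, finiteness of $C$ over $A$ is equivalent to integrality, hence to $C\subset A_B'$, and conversely any intermediate $C\subset A_B'$ is finite over the Noetherian ring $A$ as a submodule of a finite module. Your forward direction is exactly this, plus a correct gluing step (uniqueness of $X$-morphisms compatible with the dominant $d:Y\to X'_Y$, using separatedness of $Z$ and reducedness of $X'_Y$). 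Your converse supplies something the affine argument genuinely does not give: as you note, if $\rho$ is not known to be affine, the containment of rings of sections alone proves nothing, and your chain --- cancellation makes $q$ proper, properness descends along the surjection $q$, fibres of $\rho$ are images of the finite fibres of $X'_Y\to X$, and proper plus quasi-finite equals finite --- is a valid way to close that gap. (An alternative with the same effect: $q$ is finite and surjective, so by Chevalley's theorem $\rho^{-1}(U)$ is affine, being the image of the affine $(\rho\circ q)^{-1}(U)$, and then the ring argument applies directly.)

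One point deserves emphasis because it is invisible in the paper's wording: in the converse you assume the factorization $X'_Y\xrightarrow{q}Z\xrightarrow{\rho}X$ satisfies $q\circ d=(Y\to Z)$, i.e.\ that it is a factorization in the category of varieties intermediate between $X$ and $Y$, and you use this to get dominance of $q$. This reading is not optional: without the compatibility the ``if'' direction is false. Take $X=\Af^1$ and $Y=Z=\Af^2$ with the projection $(x,y)\mapsto x$; the integral closure of $k[x]$ in $k[x,y]$ is $k[x]$, so $X'_Y=X$, and the zero section $x\mapsto(x,0)$ factorizes $X'_Y\to X$ through $Z$, yet $Z\to X$ is not finite (here $q\circ d$ is $(x,y)\mapsto(x,0)$, not the identity $Y\to Z$, so compatibility fails). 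So the statement you proved --- with ``factorizes'' read compatibly, matching the ring-theoretic picture $A\subset C\subset A_B'\subset B$ --- is the only correct reading of the proposition, and your proof establishes it in full.
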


We describe now an elementary construction of the seminormalization and the weak normalization of an affine algebraic variety in another one.
Let $Y\to X$ be a dominant morphism between  affine algebraic varieties over $k$. We want to check that the rings $A_1=k[X]^{+}_{k[Y]}$ and $A_2=k[X]^*_{k[Y]}$ are coordinate rings of algebraic varieties. We know that the morphism $X_Y'\to X$ is finite by Proposition \ref{PUnormalization}, so we can apply Lemma \ref{lemintermed} below to the extensions
$$k[X]\subset A_1\subset A_2 \subset k[X'_Y]=k[X]'_{k[Y]} \subset k[Y]$$
to conclude.

\begin{lem}
  \label{lemintermed}
Let $\pi:Y\to X$ be a finite morphism between  affine
algebraic varieties over $k$. Let $A$ be a ring such that
$k[X]\subset A\subset k[Y]$. Then $A$ is the coordinate ring of a unique 
affine algebraic variety over $k$ and $\pi$ factorizes through this
variety.
\end{lem}

\begin{proof}
Since $k[Y]$ is a finite module over the Noetherian ring $k[X]$ then it is a Noetherian $k[X]$-module.
Thus the ring $A$ is a finite $k[X]$-module as a submodule of a Noetherian $k[X]$-module. It follows that $A$ is a finitely generated algebra over $k$ and the proof is done.
\end{proof}

For general constructions of the seminormalization and the weak normalization, one needs to check that the  seminormalization and the weak normalization of the local charts of an affine covering glue together to give a global variety. This is done by Andreotti and Bombieri \cite{AB} using Grothendieck criterion \cite[II Prop. 1.3.1]{Gr2} concerning the quasi-coherence of sheaves. It leads to the following definitions.

\begin{defn}
Let $\pi:Y\to X$ be a dominant morphism between algebraic varieties over $k$. The seminormalization  (resp. weak normalization) of $X$ in $Y$ is the algebraic variety $X^+_Y$ 
(resp. $X_Y^*$) over $k$ with structural sheaf equal to the seminormalization (resp. weak normalization) of  $\SO_X$ in $\pi_*(\SO_Y)$.

We call $X^{+}$ (resp. $X^*$) the seminormalization (resp. weak normalization) of $X$ in its normalization $Y=X'$. We say that $X$ is seminormal in $Y$ (resp. seminormal) if $X=X^+_Y$ (resp. $X=X^+$). We say that $X$ is weakly normal in $Y$ (resp. weakly normal) if $X=X^*_Y$ (resp. $X=X^*$). 

\end{defn}

\begin{rem} Note that the seminormalization of $X$ in $Y$ is birational to $X$, even if $Y\to X$ is not birational. It is not the case for the normalization of $X$ in $Y$ and also for the weak normalization of $X$ in $Y$. We have in general
$$Y\to X'_Y\to X^*_Y\to X^+_Y\to X.$$
\end{rem}

The seminormalization and the weak normalization of a variety in another one satisfies the following universal properties~:
\begin{prop}\label{propCSEPvariety}
Let $Y\to Z\to X$ be a sequence of dominant morphisms between algebraic varieties over $k$. Then $Z\to X$ is (resp. weakly) subintegral if and only if $X^+_Y\to X$ (resp. $X_Y^*$)
factorizes though $Z$.
\end{prop}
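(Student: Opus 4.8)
The plan is to reduce the global statement to the affine case handled by Traverso's theorem (Theorem~\ref{thmT}) combined with the universal property characterizing the seminormalization at the level of rings. Let $\pi:Y\to X$ be the given dominant morphism and suppose we are given a factorization $Y\to Z\to X$ through an intermediate variety $Z$. Since seminormalization is defined sheaf-theoretically via the seminormalization of $\SO_X$ in $\pi_*(\SO_Y)$, and all the morphisms are dominant hence local on $X$, I would first reduce to checking the equivalence over an affine open cover of $X$. On such an affine open $U\subset X$, writing $A=\SO_X(U)$, $B=\SO_Y(\pi^{-1}(U))$ and $C=\SO_Z(\rho^{-1}(U))$ where $\rho:Z\to X$ is the intermediate morphism, we have inclusions of rings $A\subset C\subset B$, and the structural sheaf of $X^+_Y$ over $U$ is $A^+_B$ by the definition of the seminormalization of a variety in another one.

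\emph{Forward direction.} Suppose $Z\to X$ is subintegral. First I would check this is equivalent to the extension $A\to C$ being subintegral for every affine $U$: by definition $\rho$ subintegral means $\rho$ is integral, bijective and equiresidual, and by the geometric reformulation (equiresidual $\iff$ hereditarily birational) together with the Lemma characterizing subintegral integral morphisms, these translate back to $A\to C$ being subintegral as ring extensions. Then the second assertion of Traverso's Theorem~\ref{thmT} gives immediately that $C\subset A^+_B$, i.e. the inclusion $A\to A^+_B$ factors through $C$, which sheafifies to a factorization $X^+_Y\to X$ through $Z$.

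\emph{Converse direction.} Suppose $X^+_Y\to X$ factorizes through $Z$, i.e. locally $A\subset C\subset A^+_B$. Since $A\to A^+_B$ is subintegral (it is the seminormalization, and by Theorem~\ref{thmT} any intermediate ring below $A^+_B$ gives a subintegral subextension), the subextension $A\to C$ is subintegral again by the second part of Theorem~\ref{thmT}. Reassembling over the cover, $Z\to X$ is integral, bijective and equiresidual, hence subintegral. The only genuine care needed is to verify that the pointwise (stalkwise / chartwise) notions of integrality, bijectivity at the point level, and equiresiduality glue correctly to the corresponding global geometric notions for $\rho:Z\to X$; this is precisely where the geometric dictionary established before the statement—the equivalence of equiresidual and hereditarily birational, and Proposition~\ref{lying-over} for the surjectivity/bijectivity part—is invoked.

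I expect the main obstacle to be purely bookkeeping rather than conceptual: one must make sure that the intermediate variety $Z$ being affine-locally described by the intermediate ring $C=A^+_B\cap C$ is compatible across overlapping charts, so that the local factorizations $X^+_Y|_U\to Z|_U$ patch to a global morphism of varieties. This uses that $A^+_B$ is itself the coordinate ring of a variety (which follows from Lemma~\ref{lemintermed} applied to $A\subset A^+_B\subset A'_B=k[X'_Y]\subset B$, exactly as in the construction preceding the statement) and that the gluing is governed by the uniqueness clause in Theorem~\ref{thmT}, so no independent gluing argument beyond what Andreotti--Bombieri already provide is required. Thus the proof is essentially the geometric globalization of the ring-theoretic universal property of $A^+_B$.
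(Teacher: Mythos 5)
Your proposal is correct and follows the same route as the paper: the paper's entire proof is the one-line observation that the statement is a reformulation of the second part of Theorem~\ref{thmT}, which is exactly the engine of your argument (the affine reduction and chart-gluing you spell out are the routine bookkeeping the paper leaves implicit, justified by the sheaf-theoretic definition of $X^+_Y$ and the Andreotti--Bombieri construction).
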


\begin{proof}
It is a reformulation of the second part of Theorem \ref{thmT}.
\end{proof}

%%%%%%%%%%%%%%%%%%%%%%%%%%%%%%%%%%%%%%%%%%%%%%%%%%%%%

\section{saturation}\label{sect-sat}

In the classical study of the seminormalization, some basic properties such as the local nature happen to be not so straightforward to prove. A nice algebraic approach has been proposed by Manaresi \cite{Mana}, in the spirit of the relative Lipschitz saturation \cite{Lip}, via the saturation of a ring $A$ in another ring $B$ which is integral over $A$. The saturation coincides with the weak normalization when the ring extension is finite. We aim to study the properties of this saturation for more general extensions, and establish its universal properties.

\subsection{Universal property of the saturation}

We define the saturation of a ring extension analogously to \cite{Mana}, but for non-necessarily integral extensions.

\begin{defn}
Let $A\to B$ be an extension of rings. The saturation of $A$ in $B$, denoted by $\widehat{A}_B$, is defined by
$$\widehat{A}_B=\{b\in B\mid b\otimes_A1-1\otimes_A b\in \NilRad (\BAB) \}$$
where the nil radical $\NilRad$ denotes the ideal of nilpotent elements.

We say that $A$ is saturated in $B$ if $\widehat{A}_B=A$.
The saturation of $A$ is its saturation in $A'$ and it is simply denoted by $\widehat{A}$. We say that $A$ is saturated if $\widehat{A}=A$. 
\end{defn}

Recall that the nilradical is the intersection of all prime ideals.
In order to study the saturation, we need to understand better the relation between prime ideals in $A$ and $B$ and prime ideals in $\BAB$. For a ring extension $A\to B$, we introduce the notation 
$\varphi_1$, $\varphi_2$ for the ring morphisms  $\varphi_i:B\to \BAB$ defined by 
\begin{equation}\label{eq-phi}
\varphi_1(b)= b\otimes_A 1 \textrm{~~~~~and~~~~~}\varphi_2(b)= 1\otimes_A b.
\end{equation}

The data of a prime ideal $\omega$ in $\BAB$, or more precisely the data of a morphism $g:\BAB\to k(\omega)$ with kernel $\omega$, is equivalent to the data of a $4$-tuple of prime ideals $$(\p_1,\p_2,\q,\p)\in\Sp B\times\Sp B\times\Sp A\times\Sp (k(\p_1)\otimes_{k(\q)}k(\p_2))$$
such that $$\p_1=\ker (g\circ \varphi_1),~~\p_2=\ker (g\circ \varphi_2),~~\q=\p_1\cap A=\p_2\cap A,~~k(\omega)=k(\p)$$ and such that the 
composition $$\BAB\to k(\p_1)\otimes_{k(\q)}k(\p_2)\to k(\p)$$ coincides with $g$.

The saturation is a ring, compatible with inclusion.

\begin{lem}\label{lem-elem} Let $A\to B$ be an extension of rings.
\begin{enumerate}
\item $\widehat{A}_B$ is a subring of $B$ containing $A$.
\item If $A\subset C \subset B$, then $\widehat{A}_B \subset \widehat{C}_B$.
\end{enumerate} 
\end{lem}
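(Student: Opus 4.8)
The plan is to verify both claims directly from the definition
$$\widehat{A}_B=\{b\in B\mid b\otimes_A1-1\otimes_A b\in \NilRad (\BAB) \},$$
using only that $\NilRad(\BAB)$ is an ideal and that ring morphisms carry nilpotents to nilpotents. Throughout I abbreviate $\delta(b)=\varphi_1(b)-\varphi_2(b)=b\otimes_A 1-1\otimes_A b$, with $\varphi_1,\varphi_2$ as in \eqref{eq-phi}. For part (1), I would first observe that $A\subset\widehat{A}_B$: for $a\in A$ one has $a\otimes_A 1=1\otimes_A a$ by the very definition of the tensor product over $A$, so $\delta(a)=0\in\NilRad(\BAB)$. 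Closure under sums and differences is then immediate, since $\delta$ is additive and the nilradical is closed under addition; in particular $1\in\widehat{A}_B$. The only point that is not completely formal is closure under products.

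For multiplicativity I would use that $\varphi_1$ and $\varphi_2$ are ring morphisms and expand, adding and subtracting $\varphi_1(b)\varphi_2(b')$:
$$\delta(bb')=\varphi_1(b)\varphi_1(b')-\varphi_2(b)\varphi_2(b')=\varphi_1(b)\,\delta(b')+\delta(b)\,\varphi_2(b').$$
If $b,b'\in\widehat{A}_B$, then $\delta(b),\delta(b')\in\NilRad(\BAB)$, and because the nilradical is an \emph{ideal} each of the two summands on the right lies in it; hence $\delta(bb')\in\NilRad(\BAB)$, i.e. $bb'\in\widehat{A}_B$. This twisted-Leibniz identity is the step I expect to be the main (indeed the only genuine) obstacle, though it reduces to this one line of computation.

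For part (2), I would exploit the canonical surjective ring morphism $\psi:\BAB\to B\otimes_C B$ arising from $A\subset C$, characterized by $\psi(b\otimes_A b')=b\otimes_C b'$. It sends $b\otimes_A 1-1\otimes_A b$ to $b\otimes_C 1-1\otimes_C b$, and being a ring morphism it maps nilpotent elements to nilpotent elements: if $x^n=0$ then $\psi(x)^n=\psi(x^n)=0$. Consequently, for $b\in\widehat{A}_B$ one applies $\psi$ to $b\otimes_A 1-1\otimes_A b\in\NilRad(\BAB)$ to obtain $b\otimes_C 1-1\otimes_C b\in\NilRad(B\otimes_C B)$, that is $b\in\widehat{C}_B$. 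This yields $\widehat{A}_B\subset\widehat{C}_B$ with no further difficulty.
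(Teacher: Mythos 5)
Your proof is correct and follows essentially the same route as the paper: the same twisted-Leibniz identity $\delta(bb')=\varphi_1(b)\,\delta(b')+\delta(b)\,\varphi_2(b')$ (the paper writes it with the factors paired slightly differently, which is equivalent by commutativity) combined with the fact that $\NilRad(\BAB)$ is an ideal for part (1), and the observation that the canonical ring morphism $\BAB\to B\otimes_C B$ carries nilpotents to nilpotents for part (2). The only cosmetic difference is that the paper packages additivity by viewing $\widehat{A}_B$ as the kernel of the $A$-module morphism $\varphi_1-\varphi_2:B\to (\BAB)/\NilRad(\BAB)$, whereas you check it by hand.
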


\begin{proof}
\begin{enumerate}
\item The set $\widehat{A}_B$ is an $A$-module as the kernel of the $A$-module morphism $$B\xrightarrow{\varphi_1-\varphi_2} \dfrac{\BAB}{\NilRad (\BAB)}.$$
The stability under product comes from the identity
$$b_1b_2\otimes_A 1-1\otimes_A b_1b_2=(b_1\otimes_A 1)(b_2\otimes_A 1-1\otimes_A b_2)+(1\otimes_A b_2)(b_1\otimes_A 1-1\otimes_A b_1)$$
and the fact that $\NilRad (\BAB)$ is an ideal.
\item The image of a nilpotent element by the ring morphism $\BAB \to B\otimes_C B$ remains nilpotent.
\end{enumerate}
\end{proof}

In order to give a universal property of the saturation, we recall the notion of radicial extension introduced by Grothendieck \cite[I, def. 3.7.2]{Gr1}. We also introduce a notion of radicial sequence of extensions similarly to \cite{Mnew}, due to the lack of integrality of the ring extensions.

\begin{defn}
\begin{enumerate}
\item An extension of rings $A\to B$ is said radicial if $\Sp  B\to\Sp A$ is injective and residually purely inseparable.
\item A sequence of extensions $A\to C\to B$ of rings is said radicial if the restriction of $\Sp C\to\Sp A$ to the image of $\Sp B\to\Sp C$ is injective and residually purely inseparable.
\end{enumerate}
\end{defn}

\begin{rem} An extension (resp. a sequence of extensions) of fields $K\to K'$ (resp. $K\to K'\to K''$) is radicial if and only if $K\to K'$ is purely inseparable.
\end{rem}

\vskip 2mm

The saturation furnishes radicial sequences of extensions.

\begin{prop}\label{prop-sat} Let $A\to B$ be a ring extension. For any $C\subset \widehat A_B$, the sequence $A\to C\to B$ is radicial.
\end{prop}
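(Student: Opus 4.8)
The plan is to unwind the definition of a radicial sequence and reduce the statement to a claim about pairs of primes of $B$ lying over a common prime of $A$. Since the image of $\Sp B\to\Sp C$ consists exactly of the contractions $\p\cap C$ with $\p\in\Sp B$, and two such contractions have the same image in $\Sp A$ precisely when the underlying primes of $B$ share a contraction $\q$ to $A$, it suffices to fix $\p_1,\p_2\in\Sp B$ with $\p_1\cap A=\p_2\cap A=:\q$ and prove two things: the equiresidual part, that $k(\q)\to k(\p_1\cap C)$ is an isomorphism, and the injective part, that $\p_1\cap C=\p_2\cap C$. The engine in both cases is the defining property of the saturation: every $c\in C\subset\widehat{A}_B$ satisfies $c\otimes_A 1-1\otimes_A c\in\NilRad(\BAB)$, so this element maps to $0$ under any ring morphism $g\colon\BAB\to\ell$ with $\ell$ a domain, since the nilradical lies in $\ker g$; concretely $g(\varphi_1(c))=g(\varphi_2(c))$.

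For the equiresidual part I would specialize to $\p_1=\p_2=\p$ and consider the natural morphism $\BAB\to k(\p)\otimes_{k(\q)}k(\p)$ induced by $B\to k(\p)$ on each factor, which is well defined because $\p\cap A=\q$. The nilpotent element $c\otimes_A 1-1\otimes_A c$ is sent to $\bar c\otimes 1-1\otimes\bar c$, with $\bar c\in k(\p)$ the class of $c$, and this image is again nilpotent. Here the characteristic zero hypothesis enters decisively: every field extension is then separable, so $k(\p)\otimes_{k(\q)}k(\p)$ is reduced, forcing $\bar c\otimes 1=1\otimes\bar c$ and hence $\bar c\in k(\q)$. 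Thus the image of $C$ in $k(\p)$ lies inside the subfield $k(\q)$; as $k(\p\cap C)=\Frac(C/(\p\cap C))$ embeds into $k(\p)$ and is generated by this image, we conclude $k(\p\cap C)=k(\q)$.

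For injectivity I would invoke the $4$-tuple description recalled before the statement. The ring $k(\p_1)\otimes_{k(\q)}k(\p_2)$ is nonzero, so choosing a prime of it yields a field $\ell$ with a morphism $g\colon\BAB\to\ell$ whose compositions with $\varphi_1,\varphi_2$ have kernels exactly $\p_1,\p_2$. The engine gives $g(\varphi_1(c))=g(\varphi_2(c))$ in $\ell$ for every $c\in C$. By the previous paragraph the classes of $c$ in $k(\p_1)$ and $k(\p_2)$ already lie in the common subfield $k(\q)$, and the restrictions to $k(\q)$ of the two embeddings $k(\p_i)\hookrightarrow\ell$ both equal the fixed embedding $k(\q)\hookrightarrow\ell$, which is injective; hence the two classes agree as elements of $k(\q)$. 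In particular the class of $c$ in $C/(\p_1\cap C)$ vanishes if and only if its class in $C/(\p_2\cap C)$ does, i.e. $c\in\p_1\iff c\in\p_2$ for all $c\in C$, whence $\p_1\cap C=\p_2\cap C$.

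The main obstacle is not a single computation but the bookkeeping of residue fields along $\BAB\to k(\p_1)\otimes_{k(\q)}k(\p_2)\to\ell$, ensuring that the two induced embeddings of $k(\q)$ coincide; the $4$-tuple correspondence of the excerpt is exactly what makes this rigorous. I expect the separability input—reducedness of $L\otimes_K L$ over a base of characteristic zero—to be the genuinely indispensable hypothesis, since in positive characteristic the same argument would only produce a purely inseparable residue extension and equiresiduality would fail. I would also emphasize that restricting $\Sp C\to\Sp A$ to the image of $\Sp B$ is essential: primes of $C$ not of the form $\p\cap C$ carry no information about $\NilRad(\BAB)$ and need not be controlled.
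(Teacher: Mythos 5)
Your proof is correct, and it splits the statement exactly as the paper does (injectivity and equiresiduality on the image of $\Sp B$), but the two halves compare differently. The injectivity half is essentially the paper's first step: both arguments pick a prime of the nonzero ring $k(\p_1)\otimes_{k(\q)}k(\p_2)$, view it as $\omega=(\p_1,\p_2,\q,\p)\in\Sp(\BAB)$ with associated morphism $g:\BAB\to k(\omega)$, and use that $g$ kills the nilpotent $c\otimes_A 1-1\otimes_A c$; the paper concludes more directly ($c\in\p_1\cap C$ gives $g\circ\varphi_1(c)=0$, hence $g\circ\varphi_2(c)=0$, hence $c\in\p_2\cap C$, then symmetry), so your detour through the common subfield $k(\q)$ is unnecessary, though harmless. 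The genuine divergence is in the equiresidual half. The paper argues by contradiction: if $k(\q)\to k(\p\cap C)$ is not an isomorphism, Lemma \ref{lem-field} (proved via \cite[Prop. I.3.7.1.c]{Gr1} and amalgamation of field extensions) produces two $k(\q)$-morphisms $\psi_1,\psi_2:k(\p)\to L$ that differ on the image of $C$, and the induced morphism $k(\p)\otimes_{k(\q)}k(\p)\to L$ yields a prime of $\BAB$ not containing $\cAc$, contradicting $C\subset\widehat{A}_B$. You instead argue directly: push the nilpotent element along $\BAB\to k(\p)\otimes_{k(\q)}k(\p)$, invoke reducedness of this tensor product (separability of all field extensions in characteristic zero), and use the equalizer identity $\{x\in L\mid x\otimes 1=1\otimes x\}=K$, valid for any field extension $K\to L$, to conclude that $C$ maps into $k(\q)$ inside $k(\p)$, whence $k(\q)\to k(\p\cap C)$ is an isomorphism. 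The two routes consume the same characteristic-zero input in different packaging: Lemma \ref{lem-field} fails for purely inseparable extensions exactly when your reducedness claim does. What yours buys is economy and transparency --- no contradiction, no amalgamation lemma, and the role of characteristic zero isolated in one standard fact; what the paper's buys is that the ``two distinct field-valued points'' formulation stays aligned with the universal-injectivity conditions (1)--(2) of Theorem \ref{PU1saturation} and Proposition \ref{radicial} that the saturation is designed to characterize. Your closing remarks --- that restricting $\Sp C\to\Sp A$ to the image of $\Sp B$ is essential, and that in positive characteristic the argument would only produce purely inseparable residue extensions --- are both accurate, and the second one correctly identifies why the paper's standing hypothesis that $k$ has characteristic zero is not cosmetic.
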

 
Before entering into the proof, we state an elementary result about field extensions. Remark that it gives a proof of (2) implies (3) of Theorem \ref{PU1saturation} in the special case of a sequence of field extensions.

\begin{lem}\label{lem-field} Let $K\to K'\to K''$ be a non radicial sequence of field extensions i.e such that $K\to K'$ is not purely inseparable. Then, there are two $K$-morphisms $K''\to L$ into a (algebraically closed) field $L$ whose compositions with $K'\to K''$ are distinct.
\end{lem} 
 
\begin{proof}
Since $K\to K'$ is not purely inseparable then it follows from \cite[Prop. I.3.7.1]{Gr1} that there are two distinct $K$-morphisms $\psi_1, \psi_2 : K'\to L'$ into a field $L'$. The point is to extend them to $K''$.

For $i\in\{1,2\}$, one can embed the field extensions $K'\to K''$ and $\psi_i: K' \to L'$ into a common extension $K'\to L'_i$ by amalgamation \cite[Chap 5, §4, Prop. 2]{Bour}. Denote by $\psi_i' :K''\to L'_i$ the induced extension.
By amalgamation of $L'_1$ and $L_2'$ over $K''$, one can assume that $\psi_1'$ and $\psi_2'$ take values in a common field $L$. 

The morphisms $\psi_1',\psi_2': K'' \to L$ fulfil the requirements since the restriction of $\psi_i'$ to $K'$ coincides with $\psi_i$.  
\end{proof}

\begin{rem} It is classical that one can choose $L=L'$ in the proof of Lemma \ref{lem-field} if the extension $K'\to K''$ is moreover algebraic, and this is used in \cite{Lip} to prove that the Lipschitz saturation is stable under contraction : in the setting of Proposition \ref{prop-sat}, if $C\to B$ is integral, then the Lipschitz saturation of $A$ in $C$ is equal to the intersection of $C$ with the Lipschitz saturation of $A$ in $B$. In our context the extensions are not assumed to be integral, and this contraction property does not hold, as illustrated by Example \ref{exVitdetail}.
\end{rem}

\begin{proof}[Proof of Proposition \ref{prop-sat}]
Let $\p_1$, $\p_2$ be two prime ideals of $B$ lying over the same ideal $\q$ of $A$. A first step is to prove that $\p_1$ and $\p_2$ lye over the same ideal of $C$.

Let $\p$ be a prime ideal of $k(\p_1)\otimes_{k(\q)}k(\p_2)$, and $\omega=(\p_1,\p_2,\q,\p)\in\Sp (\BAB)$ be the corresponding element, coming with a morphism $g:\BAB\to k(\omega)$ with $\ker g=\omega$. For $c\in\p_1\cap C$, we have $g\circ \varphi_1(c)=g(c\otimes_A 1)=0$ by construction of $g$. The element $\cAc$ is nilpotent in $\BAB$ by assumption, so that
 $$0=g(\cAc)=g\circ \varphi_1(c)-g\circ \varphi_2(c).$$
As a consequence $g\circ \varphi_2(c)=0$ and thus $c\in\p_2\cap C$. By symmetry we obtain $$\p_1\cap C= \p_2\cap C.$$
 
The second step is to prove that the extension $\phi:k(\q)\to k(\p_1\cap C)$ is purely inseparable, where $\q=\p_1\cap A$. Assume by contradiction that $\phi$ is not purely inseparable, and consider the composition
$$k(\q)\xrightarrow{\phi} k(\p_1\cap C)\rightarrow k(\p_1).$$
By Lemma \ref{lem-field}, there are two distinct $k(\q)$-morphisms $\psi_1,\psi_2:k(\p_1)\to L$ into some field $L$, which remain distinct by restriction to $k(\p_1\cap C)$. Thus there exists $c\in C$ such that $\psi_1\circ\pi (c)\not=\psi_2\circ\pi (c)$, where $\pi:B\to k(\p_1)$ denote the natural morphism.

The morphisms $\psi_1$ and $\psi_2$ induce a morphism $\psi:k(\p_1)\otimes_{k(\q)}k(\p_1)\to L$ given by 
$$\psi(\pi(b_1)\otimes_{k(\q)} \pi(b_2))=\psi_1\circ \pi(b_1)\cdot\psi_2\circ \pi(b_2)$$ 
for $b_1,b_2\in B$.
The kernel $\p$ of $\psi$ gives rise to a prime ideal $\omega=(\p_1,\p_2,\q,\p)$ of $\BAB$ coming with a morphism
$$g:\BAB\to k(\omega)\to L.$$ 
By our choice of $c$, the element
$$\psi(\pi(c)\otimes 1-1\otimes \pi(c))=\psi_1\circ\pi (c)-\psi_2\circ\pi (c)$$
is not zero, so that $\cAc$ does not belong to  $\ker g=\omega$, contradicting the inclusion $C\subset \widehat{A}_B$.
\end{proof}

Actually the converse of the preceding result holds true, and it gives rise to universal properties of the saturation, in terms of radicial sequences of extensions. This result is, up to our knowledge, not present in the literature.

\begin{thm}
\label{PU1saturation}
Let $A\xrightarrow{i} C\xrightarrow{j} B$ be a sequence of extensions of rings.
The following properties are equivalent:
\begin{enumerate}
\item For any field $K$, the map 
$$\Sp(j)\circ (\Mor(\Sp K, \Sp  B))\to \Mor(\Sp K, \Sp A)$$ 
$$(\Sp(j)\circ \alpha)\mapsto \Sp(i)\circ (\Sp(j)\circ \alpha)$$ is injective.
\item For any field $K$, if $\psi_1:B\to K$ and $\psi_2:B\to K$ are two field morphisms distinct by composition with $j$, then they are distinct by composition with $j\circ i$.
\item The sequence $A\to C\to B$ is radicial.
\item $j(C)\subset \widehat{A}_B$.
\item The kernel of the morphism $\CAC\to C$ defined by $c_1\otimes_A c_2\mapsto c_1c_2$ is included in the nilradical of $\BAB$. 
\end{enumerate}
\end{thm}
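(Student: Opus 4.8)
The plan is to establish the theorem as a cycle of implications, treating the two ``formal'' equivalences $(1)\Leftrightarrow(2)$ and $(4)\Leftrightarrow(5)$ as mere translations between formulations, and then linking the scheme-theoretic side to the saturation through the loop $(2)\Rightarrow(4)\Rightarrow(3)\Rightarrow(2)$. The implication $(4)\Rightarrow(3)$ is already furnished by Proposition \ref{prop-sat}, so the genuinely new work is concentrated in $(2)\Rightarrow(4)$ and $(3)\Rightarrow(2)$, which exchange information between field morphisms and the prime-ideal combinatorics of $\BAB$.

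I would first dispatch $(1)\Leftrightarrow(2)$. Under the identification of a $K$-point $\alpha\in\Mor(\Sp K,\Sp B)$ with a ring morphism $\psi\colon B\to K$, the element $\Sp(j)\circ\alpha$ corresponds to $\psi\circ j$ and its image $\Sp(i)\circ\Sp(j)\circ\alpha$ to $\psi\circ j\circ i$. Injectivity of the map in $(1)$ then reads: $\psi_1\circ j\circ i=\psi_2\circ j\circ i$ forces $\psi_1\circ j=\psi_2\circ j$, which is precisely the contrapositive of $(2)$. Next, for $(4)\Leftrightarrow(5)$, I would invoke the standard fact that the kernel of the multiplication map $\CAC\to C$ is generated by the elements $\cAc$ for $c\in C$. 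Their images in $\BAB$ under $j\otimes_A j$ are $1\otimes_A j(c)-j(c)\otimes_A 1$; since $\NilRad(\BAB)$ is an ideal, it contains the image of this generating set if and only if it contains the image of the whole kernel, and by the very definition of $\widehat A_B$ this is equivalent to $j(C)\subset\widehat A_B$.

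For $(2)\Rightarrow(4)$ I would use the $4$-tuple description of the primes of $\BAB$. Fix $c\in C$ and an arbitrary prime $\omega\in\Sp(\BAB)$, with its surjection $g\colon\BAB\to k(\omega)$. The two morphisms $g\circ\varphi_1,g\circ\varphi_2\colon B\to k(\omega)$, with $\varphi_1,\varphi_2$ as in \eqref{eq-phi}, agree after composition with $j\circ i$, because $j(i(a))\otimes_A 1=1\otimes_A j(i(a))$ in $\BAB$ for $a\in A$; that is, they are not distinct by composition with $j\circ i$. By the contrapositive of $(2)$ they coincide after composition with $j$, hence $g(\cAc)=0$, i.e. $\cAc\in\omega$. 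As $\omega$ was arbitrary, $\cAc\in\NilRad(\BAB)$, so $c\in\widehat A_B$ and thus $j(C)\subset\widehat A_B$.

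Finally, for $(3)\Rightarrow(2)$, take $\psi_1,\psi_2\colon B\to K$ with $\psi_1\circ j\circ i=\psi_2\circ j\circ i$, set $\q=\ker(\psi_i\circ j\circ i)$, and let $\ir_i=\ker(\psi_i\circ j)=\p_i\cap C$ with $\p_i=\ker\psi_i$. Both $\ir_i$ lie in the image $W$ of $\Sp B\to\Sp C$ and over $\q$, so injectivity of $\Sp C\to\Sp A$ on $W$ gives $\ir_1=\ir_2=:\ir$, and equiresiduality provides an isomorphism $k(\q)\xrightarrow{\sim}k(\ir)$. Since $\psi_i\circ j$ factors through $k(\ir)$ and $\psi_i\circ j\circ i$ through $k(\q)$, and a field morphism out of $k(\q)$ is determined by its restriction to $A$, the agreement on $A$ propagates through this isomorphism to yield $\psi_1\circ j=\psi_2\circ j$, which is $(2)$. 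I expect the main obstacle to be the bookkeeping in these last two steps: one must keep the $4$-tuple $(\p_1,\p_2,\q,\p)$ and the maps $\varphi_1,\varphi_2$ straight, and carefully argue that the equiresidual isomorphism $k(\q)\cong k(\ir)$ transports the equality on $A$ all the way to $C$, rather than only to the subfield generated by the image of $A$.
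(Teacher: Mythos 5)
Your proof is correct and takes essentially the same approach as the paper: the same decomposition with $(1)\Leftrightarrow(2)$ and $(4)\Leftrightarrow(5)$ as formal translations, $(4)\Rightarrow(3)$ quoted from Proposition \ref{prop-sat}, and the loop closed by $(2)\Rightarrow(4)$ via primes of $\BAB$ and the morphisms $\varphi_1,\varphi_2$, and $(3)\Rightarrow(2)$ via the injectivity/equiresiduality dichotomy on the image of $\Sp B\to\Sp C$. The only difference is that you argue these last two implications directly where the paper argues by contraposition, which is immaterial.
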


\begin{proof}
The equivalence between (1) and (2) is straightforward. 
Since $\ker (\CAC\to C)$ is generated by the elements of the form $c\otimes_A1-1\otimes_A c$ for $c\in C$, then (4) $\Leftrightarrow$ (5). Note that $(4)$ implies $(3)$ by Proposition \ref{prop-sat}.

\vskip 2mm

Let us prove that (3) implies (2) by contraposition. Let $\psi_1:B\to K$ and $\psi_2:B\to K$ be two morphisms in a field K such that $\psi_1\circ j\not=\psi_2\circ j$ and
$\psi_1\circ j\circ i=\psi_2\circ j \circ i$. Let $\p_1$, $\p_2$ and $\q$ denote respectively the kernels of $\psi_1$, $\psi_2$ and $\psi_1\circ j\circ i:A\to K$. For $i=1,2$ we get the following commutative diagram:
$$\begin{array}{ccccccc}
	A&\xrightarrow{i}  & C &\xrightarrow{j} & B & \xrightarrow{\psi_i} & K\\
	\downarrow&&\downarrow&& \downarrow & \nearrow & \\
	k(\q)&\rightarrow & k(\p_i\cap C)  & \rightarrow& k(\p_i) &  &\\
\end{array}$$
If $\p_1\cap C$ is not equal to $\p_2\cap C$, then $\Sp C\to \Sp A$ is not injective on the image of $\Sp B$.\\ 
If  $\p_1\cap C$ is equal to $\p_2\cap C$, then $\psi_1$ and $\psi_2$ induce two different $k(\q)$-morphisms $\psi'_\iota: k(\p_1\cap C) \to K$ since $\psi_1\circ j\not=\psi_2\circ j$. From \cite[Prop. I.3.7.1]{Gr1}, the extension $k(\q)\to k(\p_1\cap C)$ cannot be purely inseparable. 

In both cases, the extension $A\to C\to B$ is not radicial.

% Finally Let us prove that (2) implies (3). Let $\p_1,\p_2$ be two distinct prime ideals of $B$ such that $\p_1\cap A=\p_2\cap A$, and denote this intersection by $\q$. We aim to prove that $\p_1\cap C=\p_2\cap C$ and $k(\q)\to k(\p_1\cap C)$ is an isomorphism.

% Choosing a prime ideal $\p$ in $k(\p_1)\otimes_{k(\q)}k(\p_2)$, we get two $k(\q)$-morphisms $k(\p_1)\to k(\p)$ and $k(\p_2)\to k(\p)$. Their lifts $\psi_1:B\to k(\p_1)\to k(\p)$ and $\psi_2:B\to k(\p_2)\to k(\p)$ are distinct since they have different kernels, and their composition with $j\circ i$ coincides because $\p_1\cap A=\p_2\cap A$. As a consequence $\p_1\cap C=\p_2\cap C$ by $(2)$.

% In case $k(\q)\to k(\p_1\cap C)$ is not surjective, there exist by Lemma \ref{lem-field} two $k(\q)$-morphisms $\psi_1,\psi_2:k(\p_1)\to L$ whose compositions with $\phi:k(\p_1\cap C)\to k(\p_1)$ are distinct. The same situation holds for the liftings of $\psi_i$ and $\psi_i\circ \phi$ to $B$ and $C$ respectively, whereas the liftings to $A$ coincides. It is a contradiction with $(2)$.

\vskip 2mm
Finally we prove that (2) implies (4) by contraposition. By assumption there are $c\in C$ and $\omega \in \Sp \BAB$ such that $\cAc \notin \omega$. The ideal $\omega$ comes with a morphism $g:\BAB \to K$ with $\ker g=\omega$. Consider the composition of $g$ with the morphisms $\varphi_1$ and $\varphi_2$ defined in \eqref{eq-phi}. By construction $g\circ \varphi_1:B\to K$ coincides with $g\circ \varphi_2:B\to K$ when composed with $j\circ i$, but not when composed with $j$ because $g\circ \varphi_1 (j(c))\neq g\circ \varphi_2(j(c))$. It contradicts $(2)$.
\end{proof}

If we focus on the particular case of radicial extensions rather that sequences, we recover \cite[Prop. I.3.7.1]{Gr1}, with an additional condition using the saturation.
\begin{prop}
\label{radicial}
Let $i:A\to B$ be an extension of rings and $\Sp(i):\Sp B\to\Sp A$ be the associated map.
The following properties are equivalent:
\begin{enumerate}
\item For any field $K$, the map $$\Mor(\Sp K, \Sp  B)\to \Mor(\Sp K, \Sp A)$$
$$\alpha\mapsto \Sp(i)\circ\alpha$$ is injective.
\item If $\psi_1:B\to K$ and $\psi_2:B\to K$ are two distinct morphisms then the compositions $\psi_1\circ i$ and $\psi_2\circ i$ are different.
\item $i:A\to B$ is radicial.
\item $B=\widehat{A}_B$.
\item The kernel of the morphism $\BAB\to B$ defined by $b_1\otimes_A b_2\mapsto b_1b_2$ is included in the nilradical of $\BAB$.
\end{enumerate}
\end{prop}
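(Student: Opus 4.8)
This proposition is the "single extension" specialization of Theorem \ref{PU1saturation}, which has just been proven. So my plan would exploit that theorem directly rather than re-prove everything from scratch.

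The cleanest approach is to apply Theorem \ref{PU1saturation} to the degenerate sequence $A \to A \to B$, where $C = A$ and the map $i$ is the identity. Under this substitution, the statements of Theorem \ref{PU1saturation} collapse exactly onto the five conditions of Proposition \ref{radicial}: condition (1) becomes injectivity of $\Mor(\Sp K, \Sp B) \to \Mor(\Sp K, \Sp A)$ since $\Sp(j) = \Sp(\id)$ is a bijection that can be suppressed; condition (2) becomes the statement about two morphisms $\psi_1, \psi_2 : B \to K$ being distinguished by composition with $i$ alone; condition (3), the radiciality of the sequence $A \to A \to B$, reduces to the radiciality of the extension $A \to B$ because the image of $\Sp B$ in $\Sp A = \Sp C$ is exactly where injectivity and equiresiduality of $\Sp(i)$ are tested; condition (4) becomes $B \subset \widehat{A}_B$, which together with Lemma \ref{lem-elem}(1) giving $\widehat{A}_B \subset B$ yields $B = \widehat{A}_B$; and condition (5) is literally the same statement with $C$ replaced by $B$.

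\begin{proof}
We apply Theorem \ref{PU1saturation} to the sequence $A \xrightarrow{i} B \xrightarrow{\id} B$, that is with $C = B$ and $j = \id_B$. With this choice, $\Sp(j)$ is the identity and may be omitted, so condition (1) of Theorem \ref{PU1saturation} becomes condition (1) here, and condition (2) of the theorem becomes condition (2) here (the hypothesis ``distinct by composition with $j$'' becomes simply ``distinct''). The sequence $A \to B \to B$ is radicial precisely when the extension $A \to B$ is radicial, since the image of $\Sp B \to \Sp B$ is all of $\Sp B$; this matches condition (3). Condition (4) of the theorem reads $\id(B) \subset \widehat{A}_B$, i.e. $B \subset \widehat{A}_B$; combined with the inclusion $\widehat{A}_B \subset B$ from Lemma \ref{lem-elem}(1), this is equivalent to $B = \widehat{A}_B$, which is condition (4) here. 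Finally, condition (5) of the theorem with $C = B$ is exactly condition (5) here. The equivalences thus follow directly from Theorem \ref{PU1saturation}.
\end{proof}

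The main conceptual point to get right is the bookkeeping of which condition of the theorem specializes to which condition of the proposition, and in particular the step where $C = B$ forces $\widehat{A}_B = B$ (using the already-established inclusion $\widehat{A}_B \subset B$). I expect no real obstacle here, since the general theorem already carries all the content; the proposition is essentially a corollary recovering Grothendieck's classical characterization of radicial extensions \cite[Prop. I.3.7.1]{Gr1}, and the only care needed is verifying that the degenerate sequence genuinely reproduces each of the five single-extension conditions.
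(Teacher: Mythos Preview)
Your formal proof is correct and is exactly the paper's argument: apply Theorem \ref{PU1saturation} to the sequence $A\xrightarrow{i} B\xrightarrow{\id} B$, and observe that an extension $A\to B$ is radicial if and only if the sequence $A\to B\to B$ is.

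However, your preamble is internally inconsistent with the proof you actually wrote. There you claim to apply the theorem to the sequence $A\to A\to B$ with $C=A$, yet simultaneously write ``$\Sp(j)=\Sp(\id)$'', which requires the \emph{second} map to be the identity, i.e.\ $C=B$. The choice $C=A$ does \emph{not} work: with $i=\id_A$ and $j$ the given extension, conditions (1), (3), (4) of Theorem \ref{PU1saturation} become trivially true for every extension (e.g.\ condition (4) reads $i(A)\subset\widehat{A}_B$, which is Lemma \ref{lem-elem}(1); condition (3) asks that the identity $\Sp A\to\Sp A$ be injective and equiresidual on a subset). So the specialization you describe in words collapses to a tautology and recovers nothing. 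Fortunately the proof you actually wrote uses $C=B$, which is the correct choice; just fix the narrative to match.
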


\begin{proof}
Direct consequence of Theorem \ref{PU1saturation}, using the fact that an extension $A\to B$ is radicial if and only if the sequence of extensions $A\to B\to B$ is so.
\end{proof}

\subsection{Saturation for varieties}

We begin with the definition of radiciality and saturation for morphisms \cite[Chap. I,3.7.2]{Gr1}. Then, we extend these definitions to sequences of morphisms.

\begin{defn}
\begin{enumerate}
\item Let $\pi:Y\to X$ be a dominant morphism between algebraic varieties over $k$.
We say that $\SO_X\to \pi_*\SO_Y$ is radicial if for any open subset $U\subset X$ the extension $\SO_X (U)\to\SO_Y (\pi^{-1}(U))$ is radicial. In this situation, we say that $\pi$ is radicial.
\item Let $Y\stackrel{\phi}{\to} Z\stackrel{\psi}{\to} X$ be a sequence of dominant morphisms between algebraic varieties over $k$. We say that $\SO_X\to \psi_*\SO_Z\to (\psi\circ\phi)_*\SO_Y$ is radicial if for any open subset $U\subset X$ the sequence of extensions $\SO_X (U)\to \SO_Z (\psi^{-1}(U))\to\SO_Y ((\psi\circ\phi)^{-1} (U))$ is radicial.
In this situation, we say that the sequence of morphisms $Y\to Z\to X$ is radicial.
\item We say that $X$ is saturated in $Y$ if $\SO_X$ is saturated in $\pi_*\SO_Y$, 
i.e for any open subset $U\subset X$ then $\SO_X(U)$ is saturated in $\SO_Y(\pi^{-1}(U))$.
\end{enumerate}
\end{defn}

\begin{rem}
\label{defsatvar}
\begin{enumerate}
\item Let $\pi:Y\to X$ be a dominant morphism between algebraic varieties over $k$. Then, $\pi$ is radicial if and only if $\pi$ is injective and residually purely inseparable.
\item Let $Y\stackrel{\phi}{\to} Z\stackrel{\psi}{\to} X$ be a sequence of dominant morphisms between algebraic varieties over $k$. Then, $Y\to Z\to X$ is radicial if and only $\psi$ is injective and residually purely inseparable by restriction to the image of $\phi$.
\end{enumerate}
\end{rem}

In order to translate the universal property of the saturation in terms of varieties, we recall the notion of universal injectivity from \cite[Chap. I, 3.4.3]{Gr1}.

\begin{defn} 
\begin{enumerate}
\item A morphism $\pi:Y\to X$ between algebraic varieties over $k$ is said universally injective if for any field extension $k\to K$, the map $\pi_K:Y(K)\to X(K)$ is injective.
\item A sequence of morphisms $Y\to Z\to X$ between algebraic varieties over $k$ is said universally injective if for any field extension $k\to K$, the map $Z(K)\to X(K)$ is injective by restriction to the image of $Y(K)\to Z(K)$.
\end{enumerate}
\end{defn}

Grothendieck \cite[Prop. 3.7.1]{Gr1} proved that the notions of radicial and universally injective morphisms coincide.
The universal property given in Theorem \ref{PU1saturation} implies that it is also the case if we consider sequences of morphisms rather than morphisms.

\begin{prop}\label{PU1saturationvar}
Let $Y\stackrel{\phi}{\to} Z\stackrel{\psi}{\to} X$ be a sequence of dominant morphisms between algebraic varieties over $k$. The following properties are equivalent:
\begin{enumerate}
\item $Y\to Z\to X$ is universally injective.
\item $Y\to Z\to X$ is radicial.
\item $\psi_*\SO_Z\subset (\widehat{\SO_X})_{(\psi\circ\phi)_*\SO_Y}$ i.e for any open subset $U\subset X$ we have $\SO_Z (\psi^{-1}(U))\subset\widehat{\SO_X (U)}_{\SO_Y ((\psi\circ\phi)^{-1} (U))}$.
\end{enumerate}
\end{prop}

\begin{rem} Let $\pi:Y\to X$ be a dominant morphism between affine algebraic varieties over $k$. Contrarily to the seminormalization case it is not clear whether $\widehat{k[X]}_{k[Y]}$ is a finitely  generated algebra over $k$ and thus lead to the existence of a variety. 
\end{rem}

As a consequence of the previous remark, the statement $\pi$ is subintegral if and only if $X^+_Y=Y$ has no equivalent when $\pi$ is radicial. Nevertheless we get:
\begin{cor}
\label{PU2saturationvar}
Let $\pi:Y\to X$ be a dominant morphism between algebraic varieties over $k$. Then $\pi$ is radicial if and only if 
$(\widehat{\SO_X})_{\pi_*\SO_Y}=\pi_*\SO_Y$.
\end{cor}

%%%%%%%%%%
\section{Comparison between saturation, seminormalization and weak normalization} \label{comparaison}

In general, the seminormalization and the weak normalization are only included in the saturation.

\begin{lem} \label{satetsemi}
Let $A\to B$ be an extension of rings. Then $$A^+_B\subset A^*_B\subset \widehat{A}_B.$$
\end{lem}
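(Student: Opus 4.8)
The plan is to prove that $A^+_B \subset \widehat{A}_B$ by showing that the seminormalization, being a subintegral extension of $A$, is in particular radicial, and then invoking the universal property of the saturation established in Theorem \ref{PU1saturation}. Concretely, I would consider the sequence of extensions $A \to A^+_B \to B$. By the very definition of $A^+_B$ (Theorem \ref{thmT}), the extension $A \to A^+_B$ is subintegral, meaning it is integral, bijective at the spectrum level, and equiresidual.

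The key observation is that subintegrality is a strictly stronger condition than radiciality of the sequence $A \to A^+_B \to B$. Indeed, a radicial sequence only requires that the restriction of $\Sp A^+_B \to \Sp A$ to the image of $\Sp B \to \Sp A^+_B$ be injective and equiresidual; a subintegral extension $A \to A^+_B$ is injective (even bijective) and equiresidual on all of $\Sp A^+_B$, hence a fortiori on the image of $\Sp B$. Thus the sequence $A \to A^+_B \to B$ is radicial, which is exactly condition (3) of Theorem \ref{PU1saturation} applied to this sequence.

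Once radiciality of the sequence is in hand, Theorem \ref{PU1saturation} gives the equivalence with condition (4), namely $j(A^+_B) \subset \widehat{A}_B$, where $j : A^+_B \to B$ is the inclusion. Since $A^+_B$ is a subring of $B$ and $j$ is the inclusion, this reads precisely as $A^+_B \subset \widehat{A}_B$, which is the desired conclusion.

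I do not anticipate a serious obstacle here, as the statement is essentially a formal consequence of Theorem \ref{PU1saturation} together with the definition of subintegrality. The only point requiring a little care is the verification that a subintegral first extension $A \to A^+_B$ makes the whole sequence $A \to A^+_B \to B$ radicial; this is immediate because the injectivity and equiresiduality demanded of a radicial sequence are required only on the image of $\Sp B$, and these properties hold on the entire spectrum $\Sp A^+_B$ by subintegrality. Alternatively, one could argue directly via condition (5) of Theorem \ref{PU1saturation}, checking that the kernel of the multiplication map $A^+_B \otimes_A A^+_B \to A^+_B$ lies in the nilradical of $B \otimes_A B$, but routing through the notion of radicial sequence is the cleanest path.
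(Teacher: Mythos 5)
Your proof is correct and follows essentially the same route as the paper: the paper's own argument is exactly that subintegrality of $A\to A^+_B$ makes the sequence $A\to A^+_B\to B$ radicial, whence the inclusion follows from Theorem \ref{PU1saturation}. The only difference is that you spell out the (easy) verification that injectivity and equiresiduality on all of $\Sp A^+_B$ imply them on the image of $\Sp B$, which the paper leaves implicit.
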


\begin{proof}
We already know that $A^+_B\subset A^*_B$.
Since $A\to A^*_B$ is weakly subintegral then $A\to A^*_B\to B$ is radicial. The inclusion $A^*_B\subset \widehat{A}_B$ follows by Theorem \ref{PU1saturation}.
\end{proof}

If $A\to B$ is a purely inseparable extension of fields which is not an isomorphism then we get $A^+_B\subset A^*_B= \widehat{A}_B$ and the inclusion is strict. So in the sequel we focus in the comparison between weak normalization and saturation.

Note that there is no special relationship between the saturation and the relative normalization. However saturation and weak normalization coincide when we restrict to integral extensions.
\begin{prop} \label{satetsemi2}
Let $A\to B$ be an integral extension of rings. Then $$A^*_B= \widehat{A}_B.$$
\end{prop}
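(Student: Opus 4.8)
The goal is to prove that for an integral extension $A\to B$, the seminormalization $A^+_B$ coincides with the saturation $\widehat{A}_B$. We already have the inclusion $A^+_B\subset \widehat{A}_B$ from Lemma \ref{satetsemi}, so the plan is to establish the reverse inclusion $\widehat{A}_B\subset A^+_B$. The natural strategy is to show that the extension $A\to \widehat{A}_B$ is subintegral, since then Theorem \ref{thmT} (Traverso's theorem) forces $\widehat{A}_B\subset A^+_B$, because $A^+_B$ is characterized as containing every intermediate ring $C$ with $A\to C$ subintegral. A subintegral extension must be integral, bijective at the spectrum level, and equiresidual.

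First I would exploit the integrality hypothesis. Since $A\to B$ is integral and $\widehat{A}_B$ is an intermediate ring $A\subset \widehat{A}_B\subset B$, the extension $A\to \widehat{A}_B$ is automatically integral. By Proposition \ref{prop-sat} applied with $C=\widehat{A}_B$, the sequence $A\to \widehat{A}_B\to B$ is radicial, which means the restriction of $\Sp \widehat{A}_B\to \Sp A$ to the image of $\Sp B\to \Sp \widehat{A}_B$ is injective and equiresidual. The key point is that integrality upgrades this to the full statement: an integral extension is surjective at the spectrum level by Proposition \ref{lying-over}, so $\Sp B\to \Sp \widehat{A}_B$ is surjective, meaning its image is all of $\Sp \widehat{A}_B$. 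Hence the radiciality of the sequence gives outright injectivity and equiresiduality of $\Sp \widehat{A}_B\to \Sp A$ on all of $\Sp \widehat{A}_B$.

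It remains to promote injectivity to bijectivity, i.e. surjectivity of $\Sp \widehat{A}_B\to \Sp A$. This again follows from Proposition \ref{lying-over} since $A\to \widehat{A}_B$ is an integral extension: the map $\Sp \widehat{A}_B\to \Sp A$ is surjective. Combining, the map $\Sp \widehat{A}_B\to \Sp A$ is integral, bijective, and equiresidual, which is exactly the definition of $A\to \widehat{A}_B$ being subintegral. By the second part of Theorem \ref{thmT}, any intermediate ring $C$ with $A\to C$ subintegral satisfies $C\subset A^+_B$; applying this with $C=\widehat{A}_B$ yields $\widehat{A}_B\subset A^+_B$, and together with Lemma \ref{satetsemi} we conclude $A^+_B=\widehat{A}_B$.

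I do not anticipate a serious obstacle here, as the argument is essentially a bookkeeping assembly of results already in hand; the one subtlety to verify carefully is that the equiresiduality coming from the radiciality of the sequence $A\to\widehat{A}_B\to B$ transfers correctly to the equiresiduality needed in the definition of subintegrality — namely that equiresidual at every prime of $\widehat{A}_B$ follows once $\Sp B\to\Sp\widehat{A}_B$ is surjective, so that every prime of $\widehat{A}_B$ lies in the image. The interplay between the two surjectivity statements furnished by Proposition \ref{lying-over} (one for $A\to B$ and one for $A\to\widehat{A}_B$) is what makes the integral hypothesis do its work, and I would state explicitly which extension each application refers to.
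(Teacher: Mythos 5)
Your proposal is correct and follows essentially the same route as the paper's proof: radiciality of the sequence $A\to\widehat{A}_B\to B$ from the saturation property, surjectivity of $\Sp B\to\Sp\widehat{A}_B$ via Proposition \ref{lying-over} to upgrade it to radiciality of $A\to\widehat{A}_B$, a second application of Proposition \ref{lying-over} to get bijectivity and hence subintegrality, and finally Theorem \ref{thmT}. The paper compresses these steps into two sentences, but the content, including the two separate uses of lying-over that you flag explicitly, is identical.
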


\begin{proof}
The direct inclusion comes from Lemma \ref{satetsemi}.

 The sequence $A\to  \widehat{A}_B\to B$ is radicial by Theorem \ref{PU1saturation}. Since $ \widehat{A}_B\to B$ is integral then $\Sp B\to \Sp \widehat{A}_B$ is surjective by Proposition \ref{lying-over}. It follows that $A\to  \widehat{A}_B$ is radicial and integral and thus is weakly subintegral. This forces $ \widehat{A}_B$ to be equal to the weak normalization $A^*_B$ of $A$ in $B$ by Theorem \ref{thmT}.
\end{proof}

Finally we state the relations between saturation, weak normalization and seminormalization for varieties induced by Lemma \ref{satetsemi} and Proposition \ref{satetsemi2}.

\begin{prop} \label{sat=semivar}
Let $\pi:Y\to X$ be a dominant morphism between varieties over $k$. 
\begin{enumerate}
\item If $\pi$ is subintegral then $\pi$ is weakly subintegral. 
\item If $\pi$ is weakly subintegral then $\pi$ is radicial.
\item If $X$ is saturated in $Y$, then $X$ is weakly normal and seminormal in $Y$. 
\item If $\pi$ is moreover integral, then $\pi$ is weakly subintegral if and only if $\pi$ is radicial, and $X$ is saturated in $Y$ if and only if $X$ is weakly normal in $Y$.
\end{enumerate}
\end{prop}

In the following proposition and examples $k$ is an algebraically closed field and $\car (k)=0$.

We compare the notions of seminormality and relative seminormality.
\begin{prop} \label{semimprel}
Let $\pi:Y\to X$ be a dominant morphism between varieties over $k$. If $X$ is seminormal then $X$ is seminormal in $Y$.
\end{prop}

\begin{proof}
Suppose $X$ is not seminormal in $Y$. From Proposition \ref{propCSEPvariety} the morphism $\varphi:X_Y^+\to X$ is subintegral, factorizes $\pi$ and is not an isomorphism. Since $\varphi$ is birational and finite then it follows that the normalization map $X'\to X$ factorizes throught $\varphi$. By subintegrality of $\varphi$ and seminormality of $X$ then we get a contradiction.
\end{proof}

The converse of Proposition \ref{semimprel} is false, take $Y=X$ with $X$ not seminormal for example.
\medskip

From \cite[Rem. 1.4]{C}, we know that the notions of relative weak normalization and relative saturation differ in any characteristic when we do not consider integral extensions of rings and integral morphisms of varieties.
We end this section by providing explicit examples to illustrate that the notions of relative saturation and seminormalization do not coincide for varieties over an algebraically closed field of characteristic null, in any dimension. The examples are built on Example \ref{exVitdetail}, constructed from a nodal curve, for which we offer two arguments : a simple geometric one, and a direct computational one in order to construct the generalization in any dimension in Example \ref{exVitdetail2}.

\begin{ex} \label{exVitdetail}
\begin{enumerate}
\item Let $X$ be the nodal plane curve with coordinate ring $A=k[X]=k[x,y]/(y^2-x^2(x+1))$. Its normalization $X'$ has coordinate ring $A'=k[X']=k[x,z]/(z^2-(x+1))=A[y/x]$ and the inclusion $A\to A'$ is given by $(x,y)\mapsto (x,xz)$. Let $Y$ be defined by removing one of the two points $p=(0,1)$ and $q=(0,-1)$ of $X'(k)$ lying above the singular point of $X(k)$, say $p$. The coordinate ring of $Y$ is $$B=k[Y]=k[x,z,s]/(z^2-(x+1),s(z-1)-1)=A'[1/(z-1)]=A'[s]=A[y/x,s],$$
and we have a sequence of inclusions $A\to A'\to B$.

Then $A^+_B=A$ whereas $\widehat{A}_B=B$. To see the first point, since the variety $X$ is seminormal (see \cite{GT}) then $X$ is seminormal in $Y$ by Proposition \ref{semimprel}. For the second point, note that $A\to B$ is radicial because, for irreducible curves, the prime ideals correspond to the generic point and the closed points, and here $Y\to X$ is birational with $Y(k)\to X(k)$ bijective. As a consequence $\widehat{A}_B=B$ by Proposition \ref{radicial}.

\item We revisit the nodal curve example proving the equality $\widehat{A}_B=B$ using the very definition of the saturation. Keeping previous notation, set $\alpha=(z\otimes_A 1)-(1\otimes_A z)$ and $\beta=(s\otimes_A 1)-(1\otimes_A s)$. It suffices to prove that $\alpha$ and $\beta$ are nilpotent elements of $\BAB$. Indeed $\widehat{A}_B$ is a ring containing $x,z$ and $s$ so that $\widehat{A}_B=B$ in that case.

Note that
$$\begin{array}{cccc}
x \alpha  & = & x \big((z+1)\otimes_A 1-1\otimes_A(z+1)\big) & \\
&= &  \big(x(z+1)\otimes_A 1\big) - \big(1\otimes_A x(z+1)\big) &\\
&= &  \big((y+x)\otimes_A 1\big) - \big(1\otimes_A (y+x)\big) & \textrm{~~since~~} y+x=x(z+1) \textrm{~~in~~} B\\
&= &  0 &\\
\end{array}$$
hence
$$\begin{array}{cccc}
\alpha^2  & = & \alpha \big((z+1)\otimes_A 1-1\otimes_A(z+1)\big) & \\
  & = & \alpha (xs\otimes_A 1-1\otimes_Axs) & \textrm{~~since~~} xs=z+1 \textrm{~~in~~} B\\
&= &  x\alpha (s\otimes_A 1 - 1\otimes_A s) &\\
&= &  0. &\\
\end{array}$$
Actually we even have $\alpha=0$ in $\BAB$, since a straightforward computation shows that $\alpha=\frac1{4}\alpha^3$. Finally, using the equality $\alpha= (z-1)\otimes_A 1-1\otimes_A(z-1)$ and the relation $s(z-1)=1$, we observe that 
$$0=(s\otimes_A 1)\alpha (1\otimes_A s)  =-\beta.$$
\end{enumerate}
\end{ex}

\begin{ex}\label{exVitdetail2} Consider the curves $X$ and $Y$ as in Example \ref{exVitdetail}. For $n\geq 1$, the variety $X\times\Af_k^n$ is seminormal in $Y\times\Af_k^n$, whereas the saturation of $X\times\Af_k^n$ in $Y\times\Af_k^n$ is $Y\times\Af_k^n$. 

To see this, note that $X$ and $\Af_k^n$ are seminormal, so $X\times \Af_k^n$ is also seminormal \cite[Cor. 5.9]{GT} and thus $X\times \Af_k^n$ is seminormal in $Y\times\Af_k^n$ by Proposition \ref{semimprel}.

For the saturation, if the radiciality of $$k[X\times\Af_k^n]= A[t_1,\ldots,t_n] \to B[t_1,\ldots,t_n]=k[Y\times\Af_k^n]$$ is not so straightforward since we no longer work with curves as in Example \ref{exVitdetail} (1), the computations done in Example \ref{exVitdetail} (2) still prove that $\widehat{A[t_1,\cdots,t_n]}_{B[t_1,\cdots,t_n]}$ contains $x,z$ and $s$, and so is equal to $B[t_1,\ldots,t_n]$.
\end{ex}

%%%%%%%%%%%%%%%%%%%%%%%%%%%%%%%%%%%%%%%%%
\section{Strong topology on the rational closed points and regulous functions}\label{regulous}

Continuous rational functions and regulous functions have been originally studied in real algebraic geometry \cite{Ku,KN,FHMM}, where the continuity is regarded with respect to the Euclidean topology, which can be studied algebraically via semialgebraic open sets \cite{BCR}. For an algebraic variety $X$ over $\C$, we can consider the Euclidean (or strong) topology of the complex points $X(\C)$ seen as a topological variety (see \cite{Sha2}). For example, if $X$ is affine then we  have $X\subset \mathbb A_{\C}^n$ for some $n\in\N$ and the strong topology is induced by the natural inclusion $X(\C)\subset \R^{2n}$. With this point of view Bernard has developed in \cite{Be} the theory of regulous functions for algebraic varieties over $\C$.

Subintegral extensions and regulous functions are strongly related in real algebraic geometry as developed in \cite{FMQ2}.
Working with the field of complex numbers, we know from the work of Bernard that the same holds true in the geometric case. The purpose of this section is first to generalize the work of Bernard to varieties over any algebraically closed field of characteristic zero, and then to relate it to the theory of relative seminormalization.

\vskip 2mm

In this section $k$ is an algebraically closed field and $\car (k)=0$.

\subsection{Generalizing the strong topology of $\C$}\label{sect-R}

Since there is a priori no natural strong topology on the $k$-rational points of a variety over $k$, we use the theory of real closed fields to define such a topology as in \cite{K,HK} (see also \cite{BW} for a recent cohomological use of this approach).

\vskip 2mm

From Artin Schreier theory \cite{AS}, we know the existence of (many) real closed subfields of $k$ with algebraic closure equal to $k$. Let $R\subset k$ denote one of these real closed fields. Then $k=R[\sqrt{-1}]$ and $R$ comes with a unique ordering. The ordering on $R$ gives rise to an order topology on the affine spaces $R^n$, in a similar way than the Euclidean topology on $\R^n$, even if the topological space $R$ is not connected (except in the case $R=\R$) or the closed interval $[0,1]$ is in general not compact. 

\vskip 2mm
We use this choice of $R$ to define a topology on the closed points of an algebraic variety over $k$. First, for an algebraic variety $X$ over $R$, choose an affine covering of $X$ by Zariski open subsets $U_i$, and endow each affine sets $U_i(R)$ with the order topology. These open sets glue together to define the order topology on $X(R)$, and this topology does not depend on the choice of the covering. This topological space can be endowed additionally with the structure of a semialgebraic space by considering the sheaf of continuous semialgebraic functions \cite{DK2,DK}, or even of a real algebraic variety with the sheaf of regular functions on the $R$-points \cite{BCR,Hui}.

Consider now the case of a quasi-projective algebraic variety $X$ over $k$. 
By Weil restriction \cite{W,GrFGA}, we associate to $X$ an algebraic variety $X_R$ over $R$ whose $R$-points are in bijection with the $k$-points of $X$. 
%In case $X$ is affine, with $k[X]=k[z_1,\ldots, z_n]/(P_1,\ldots,P_r)$ for $P_1,\ldots,P_r\in k[z_1,\ldots, z_n]$, the coordinate ring of $X_R$ is simply given by $$R[X_R]=R[x_1,\ldots,x_r,y_1,\ldots,y_r]/(\Re P_1,\ldots, \Re P_r, \Im P_1,\ldots,\Im P_r)$$
%where $\Re P\in R[x,y]$ and $\Im P\in R[x,y]$ denotes the real and imaginary parts of $P\in R[\sqrt{-1}][z]$. 
We endow $X(k)$ with the topology induced by the order topology on $X_R(R)$, and we call it the $R$-topology on  $X(k)$.

If $X$ is no longer quasi-projective, then the Weil restriction does not necessarily exist. Anyway choose an affine open covering $(U_i)_{i\in I}$ of $X$, endow the $R$-points of the Weil restrictions $(U_i)_R$ with the order topology, and note that these open sets glue together to define a topology on $X(k)$. This topology does not depend on the choice of the covering by \cite[Lemma 5.6.1]{Sc}, and we call it the $R$-topology on $X(k)$. Again one can consider $X(k)$ as a semialgebraic space in the sense of \cite{DK} or as a real algebraic variety in the sense of \cite{BCR}.

\vskip 2mm
The $R$-topology on $X(k)$ has many good properties, for instance $X(k)$ is semialgebraically connected and of pure dimension twice the dimension of $X$ if $X$ is irreducible \cite{K}. For $k=\C$ and $R=\R$, the $\R$-topology is nothing more than the strong topology.
 The choice of a different real closed field $R$ in $k$ will lead to different topologies on $X(k)$ (for instance the semialgebraic fundamental group does depend on the choice of $R$ \cite{K}). Already with $k=\C$, one can choose a real closed field different from $\R$, even for instance a non-Archimedean $R\subset \C$. We will see however that in our setting, the choice of the real closed field is transparent.

\subsubsection{Basics on the $R$-topology of $k$-varieties}

In this section we fix a real closed field $R$ with algebraic closure $k$. 

Let $X$ be a quasi-projective algebraic variety over $k$. Recall that by Weil restriction \cite{GrFGA,Sc}~:
\begin{enumerate}
\item The variety $X_R$ is nonsingular if $X$ is nonsingular. More precisely, a $k$-point in $X$ is singular if and only if its corresponding $R$-point in $X_R$ is singular.
\item A Zariski open subset $U\subset X$ induces a Zariski open subset $U_R\subset X_R$.
\item A proper morphism $Y\to X$ between quasi-projective algebraic varieties over $k$ induces a proper morphism $Y_R\to X_R$.
\item A finite morphism $Y\to X$ between quasi-projective algebraic varieties over $k$ induces a finite morphism $Y_R\to X_R$.
\end{enumerate}

Let $X$ be an affine algebraic variety over $k$. A regular function on $X$ gives rise  to a polynomial (and thus continuous) mapping $X_R(R)\to R^2$. Indeed the regular function is polynomial, and by Weil restriction a polynomial function to $k$ induced a polynomial mapping to $R^2$ by taking the real and imaginary parts. Finally a polynomial function is continuous with respect to the $R$-topology.

The topological properties of $R$-varieties coming from $k$-varieties are much more moderate than for general $R$-varieties. For instance, if complex irreducible varieties are locally of equal dimension, irreducible algebraic subsets of $\R^n$ may have isolated points. From \cite{BCR}, a real algebraic variety is called central if its subset of nonsingular points is dense with respect to the $R$-topology.

For a subset $A\subset X(k)$, we denote by $\overline{A}^R$ the closure of $A$ with respect to the $R$-topology. We denote by $\Reg(X(k))$ the set of nonsingular points of $X(k)$.
\begin{prop}\label{prop-cent} Let $X$ be an irreducible algebraic variety over $k$. Then $X(k)$ is central :
$$\overline{\Reg(X(k))}^R=X(k).$$ 
\end{prop}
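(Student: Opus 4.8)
The plan is to prove that $X(k)$ is central by reducing the statement, via Weil restriction, to a known density result for nonsingular points of an irreducible real algebraic variety, and then to control how nonsingularity and irreducibility transfer under the restriction functor. Concretely, I would first pass to the associated $R$-variety $X_R$ obtained by Weil restriction, since the $R$-topology on $X(k)$ is by definition the order topology coming from $X_R(R)$, and the bijection $X(k)\simeq X_R(R)$ identifies the two closures we must compare. Property (1) recalled at the beginning of the section is the crucial input: a $k$-point of $X$ is singular if and only if the corresponding $R$-point of $X_R$ is singular. This means $\Reg(X(k))$ corresponds exactly to $\Reg(X_R(R))$, so it suffices to prove $\overline{\Reg(X_R(R))}^R = X_R(R)$ in the $R$-topology, i.e.\ that the real variety $X_R$ is central.

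The main step is then to show that $X_R(R)$, with its nonsingular locus, is dense in the $R$-topology. The natural route is through dimension: if $X$ is irreducible of dimension $d$ over $k$, then $X(k)$ is of pure dimension $2d$ with respect to the $R$-topology, as recalled just before the statement (citing \cite{K}). The key point is that $X_R$, coming from a $k$-variety, is far more regular than a generic real algebraic variety: it cannot have isolated lower-dimensional components or singular points that are isolated in the $R$-topology, precisely because over the algebraically closed field $k$ the variety $X$ is locally of pure dimension. I would argue that the singular locus $\Sing(X)$ is a proper Zariski-closed subset of $X$, hence of strictly smaller dimension $< d$ over $k$; by the dimension count its set of $k$-points has $R$-dimension $< 2d$, so it cannot contain any nonempty $R$-open subset of the pure-$2d$-dimensional space $X(k)$. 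Consequently the complement $\Reg(X(k))$ is $R$-dense.

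The technical heart, and the step I expect to be the main obstacle, is justifying the pure-dimensionality claim and the comparison of $R$-dimensions rigorously. One must know that for an irreducible $k$-variety, $X(k)$ has no $R$-isolated points and is of pure $R$-dimension $2d$, and that a proper Zariski-closed subset maps to something of strictly smaller $R$-dimension; this is where the good transfer properties of Weil restriction (openness of $U_R$ for open $U$, the dimension-doubling in \cite{K}) do the real work. Care is needed because a general real algebraic set can fail to be central, so the argument genuinely relies on $X_R$ arising from a complex-type variety rather than being an arbitrary real variety. A clean way to finish is to note that on the smooth locus, the local model of $X_R(R)$ is a real manifold of dimension $2d$, hence has nonempty interior, while the singular $R$-points sit in a set of dimension at most $2d-2$; thus any $R$-neighborhood of a point of $X(k)$ meets $\Reg(X(k))$, giving the desired density $\overline{\Reg(X(k))}^R = X(k)$.
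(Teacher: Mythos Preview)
Your approach is correct in outline but differs genuinely from the paper's. The paper does \emph{not} argue by dimension count; instead it reduces to the affine case, then uses resolution of singularities: for singular $X$ one takes a resolution $\sigma:\tilde X\to X$, notes $\tilde X(k)$ is central (since $\tilde X_R$ is nonsingular), and for any $x\in X(k)$ picks $\tilde x\in\sigma_k^{-1}(x)$ and applies the Curve Selection Lemma over $R$ to get a continuous semialgebraic arc in $\tilde X(k)$ from $\tilde x$ into the regular locus; composing with $\sigma_k$ gives an arc in $X(k)$ from $x$ into $\Reg(X(k))$. This is constructive and uses only resolution plus the semialgebraic Curve Selection Lemma, not any dimension statement for the singular locus.

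Your dimension argument works provided you may take \emph{local} pure $2d$-dimensionality of $X(k)$ from \cite{K} as a black box. Be aware of a circularity risk: the paper explicitly records local pure-dimensionality as a \emph{corollary} of this proposition (the Remark immediately after states ``In particular\ldots the local semialgebraic dimension of $X(k)$ at any point $x\in X(k)$ is equal to $2d$''). So before invoking \cite{K} you should verify that Knebusch's proof of pure dimension is independent of centrality; if it is, your route is a legitimate and somewhat shorter alternative. What the paper's route buys is self-containment (no reliance on \cite{K} for the key step) and the explicit semialgebraic arc, which is reused later in the paper (e.g.\ in the proof of Proposition~\ref{R-prop-fini}). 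What your route buys is brevity, once local equidimensionality is granted. Either way you should begin, as the paper does, by localizing to the affine case so that the Weil restriction exists.
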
 

\begin{proof}
The question being local, it suffices to assume $X$ is affine, and in particular the Weil restriction of $X$ exists.

If $X$ is nonsingular, then so is $X_R$ by Weil restriction, and so $X_R(R)$ is central. 

Otherwise, consider a resolution $\sigma :\tilde X \to X$ of the singularities of $X$ which exists by \cite{Hiro} since $k$ has characteristic zero. Then $\sigma_k$ is surjective since $k$ is algebraically closed, and one can assume that $\sigma_k$ induces a bijection 
$$\tilde U=\sigma_k^{-1}(\Reg(X(k))\to \Reg(X(k))=U.$$
Let $x\in X(k)$, and choose a preimage $\tilde x\in \sigma_k^{-1}(x)$. The centrality of $\tilde X(k)$ implies the existence of a continuous semialgebraic curve $\tilde \gamma :[0,1]\to \tilde X(k)$ with $\tilde \gamma(0)=\tilde x$ and $\tilde \gamma(t)\in \tilde U$ for $t\in (0,1]\subset R$ by the Curve Selection Lemma \cite[Theorem 2.5.5]{BCR}. Its composition $\gamma=\sigma_k \circ \tilde \gamma$ is a continuous semialgebraic curve from $[0,1]$ to $X(k)$ with $\gamma(0)=x$ and $\gamma(t)\in U$ for $t\in (0,1]\subset R$. As a consequence $x$ belongs to the closure with respect to the $R$-topology of $\Reg(X(k))$ in $X(k)$, and so $X(k)$ is central.
\end{proof}

\begin{rem}
In particular, if the irreducible algebraic variety $X$ over $k$ has dimension $d$, then the local semialgebraic dimension of $X(k)$ at any point $x\in X(k)$ is equal to $2d$.
\end{rem}

The following result is not valid in general for algebraic varieties over $R$,  and even for $R=\R$. 
\begin{lem}\label{lem-dense} Let $X$ be an irreducible algebraic variety over $k$. A non-empty Zariski open subset of $X(k)$ is dense with respect to the $R$-topology.
\end{lem}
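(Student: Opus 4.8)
The plan is to prove the density by a semialgebraic dimension count, exploiting the fact that $X(k)$, viewed as the $R$-points of the Weil restriction $X_R$, has pure local $R$-dimension $2d$ at \emph{every} point — a feature special to the $R$-points of $k$-varieties that fails for general $R$-varieties, and precisely the reason the statement holds here. First I would reduce to the affine case. It suffices to prove the lemma when $X$ is affine: for a general $X$, a point $x\in X(k)$ and an affine open $U\ni x$, the $R$-topology on $U(k)$ is the subspace topology and $V\cap U$ is again a nonempty Zariski open of the irreducible $U$, so the affine case applied to $V\cap U$ gives $x\in \overline{V\cap U}^R\subset \overline{V}^R$. Thus I assume $X$ affine, so that $X_R$ exists and $X(k)$ is identified with $X_R(R)$ equipped with the order topology; write $d=\dim X$.

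Next I would carry out the dimension count. Let $C=X(k)\setminus V$. Since $V$ is a nonempty Zariski open of the irreducible $X$, its complement $C$ is the set of closed points of a proper Zariski closed subvariety, so $\dim_k C\le d-1$; under Weil restriction the associated semialgebraic subset of $X_R(R)$ has $R$-dimension $2\dim_k C\le 2d-2<2d$. On the other hand, by Proposition \ref{prop-cent} and the remark following it, $X(k)$ has local semialgebraic dimension $2d$ at every point. Hence for any $x\in X(k)$ and any $R$-open neighborhood $W$ of $x$ we have $\dim W=2d$ while $\dim(W\cap C)\le \dim C<2d$, so the additivity of semialgebraic dimension under finite unions forces $\dim(W\setminus C)=2d$, and in particular $W\cap V=W\setminus C\neq\emptyset$. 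Therefore $x\in\overline{V}^R$, and since $x$ is arbitrary, $V$ is $R$-dense.

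The main obstacle is the input that $X(k)$ has local $R$-dimension $2d$ at \emph{every} point, including the singular ones: without it the argument collapses, since over a general real closed field a lower-dimensional complement may contain isolated points of the space, which is exactly why the lemma is false for general algebraic varieties over $R$. This purity is supplied by the centrality of $X(k)$ (Proposition \ref{prop-cent}), which rests on resolution of singularities and on $k$ being algebraically closed, and it is the only non-formal ingredient; the remaining facts — that Weil restriction doubles dimensions and that semialgebraic dimension is additive under finite unions — are standard over any real closed field by \cite{BCR}.
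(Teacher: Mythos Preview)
Your proof is correct and follows essentially the same approach as the paper: reduce to the affine case so the Weil restriction exists, invoke centrality (Proposition~\ref{prop-cent}) to get pure local $R$-dimension $2d$ at every point, and use that the Zariski complement has strictly smaller semialgebraic dimension. The only cosmetic difference is in the final step, where the paper appeals to a semialgebraic triangulation adapted to the complement while you use additivity of semialgebraic dimension under finite unions; both are standard over any real closed field and yield the same conclusion.
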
 

\begin{proof} The question being local, it suffices to assume $X$ is affine, and in particular the Weil restriction of $X$ exists.

A Zariski open subset remains Zariski open by Weil restriction. Combined with Proposition \ref{prop-cent}, it suffices to check that a non-empty Zariski open set $U$ in a central irreducible algebraic variety over $R$ is dense with respect to the $R$-topology. This last property is classical ; for instance, the complement is an algebraic subset of strictly smaller dimension, and a semialgebraic triangulation of $X(k)$ adapted to the complement shows that locally, a point in the complement is in the boundary of a semialgebraic simplex in  $U(k)$. 
\end{proof}

Over a general real closed field, the notion of compact sets is advantageously replaced by closed and bounded semialgebraic sets. For instance, the image of a closed and bounded semialgebraic set by a continuous semialgebraic map is again closed and bounded (and semialgebraic) \cite[Theorem 2.5.8]{BCR}.
A semialgebraic map is said to be proper with respect to the $R$-topology if the preimage of a closed and bounded semialgebraic set is closed and bounded.

\begin{lem}\label{lem-surj} Let $\sigma:\tilde X\to X$ be a proper morphism between irreducible varieties over $k$. Then $\sigma_k$ is proper with respect to the $R$-topology. If $\sigma$ is moreover birational, then $\sigma_k$ is surjective.
\end{lem}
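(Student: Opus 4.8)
The plan is to treat the two assertions separately, reducing both to the real closed field $R$ via Weil restriction. Since the statements are local on the target, I would first assume $X$, and hence $\tilde X$, to be affine (or at least quasi-projective), as in the proofs of Proposition \ref{prop-cent} and Lemma \ref{lem-dense}, so that the Weil restrictions $X_R$ and $\tilde X_R$ exist and the $R$-topology on $\tilde X(k)=\tilde X_R(R)$ and on $X(k)=X_R(R)$ is precisely the order topology coming from $R$. In this setting boundedness of semialgebraic subsets is meaningful and the map $\sigma_k$ is identified with the map on $R$-points induced by $\sigma_R$.

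For the properness assertion, the morphism $\sigma:\tilde X\to X$ is proper by hypothesis, so by the Weil restriction properties recalled at the beginning of the section it induces a proper morphism $\sigma_R:\tilde X_R\to X_R$ between algebraic varieties over $R$. I would then invoke the standard dictionary of semialgebraic geometry: a scheme-theoretically proper morphism of varieties over a real closed field induces a semialgebraically proper map on $R$-points, that is, the preimage of a closed and bounded semialgebraic set is again closed and bounded. Applying this to $\sigma_R$, and recalling that $\sigma_k$ is the induced map on $R$-points, yields immediately that $\sigma_k$ is proper with respect to the $R$-topology.

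For surjectivity under the additional birational hypothesis, I would exploit the properness just obtained. A semialgebraically proper map is closed, so the image $\sigma_k(\tilde X(k))$ is closed in $X(k)$ with respect to the $R$-topology. On the other hand, since $\sigma$ is birational there is a non-empty Zariski open subset $U\subset X$ over which $\sigma$ restricts to an isomorphism, whence $U(k)\subset \sigma_k(\tilde X(k))$. By Lemma \ref{lem-dense}, the set $U(k)$ is dense in $X(k)$ for the $R$-topology, so its $R$-closure is all of $X(k)$; being contained in the $R$-closed set $\sigma_k(\tilde X(k))$, this forces $\sigma_k(\tilde X(k))=X(k)$, which is the desired surjectivity.

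The main obstacle is the first assertion, namely making the passage from scheme-theoretic properness of $\sigma_R$ to semialgebraic properness of the map on $R$-points fully rigorous, together with the attendant bookkeeping of boundedness when $X$ is not affine (which is why I reduce to affine charts at the outset). The surjectivity argument is then comparatively soft, resting only on the closedness of proper semialgebraic maps and on the $R$-density of non-empty Zariski open sets provided by Lemma \ref{lem-dense}. I note that surjectivity could alternatively be deduced purely scheme-theoretically, since a proper dominant morphism onto an irreducible variety is surjective and $k$ is algebraically closed, but the $R$-topological argument above has the advantage of reusing the first part and fitting the semialgebraic framework developed in this section.
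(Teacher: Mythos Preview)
Your proposal is correct and follows essentially the same approach as the paper: localize on the target, use that Weil restriction preserves properness, and invoke the semialgebraic fact that scheme-theoretic properness over $R$ yields semialgebraic properness on $R$-points (the paper cites \cite[Thm.~9.6]{DK2} and \cite[Prop.~5.7]{DK} for this step), then deduce surjectivity from closedness of the image together with Lemma~\ref{lem-dense}. Your surjectivity argument is in fact slightly more direct than the paper's, which routes the inclusion $U(k)\subset\sigma_k(\tilde X(k))$ through the chain $U(k)=\sigma_k(\tilde U(k))\subset\sigma_k(\overline{\tilde U(k)}^R)=\sigma_k(\tilde X(k))$, but the substance is identical.
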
 

\begin{proof}
The notion of properness is local on the target, so that there is an affine covering of $X$ such that for any open affine subset $U$ in the covering, the restriction $\sigma'=\sigma_{|\sigma^{-1}(U)}$ of $\sigma$ to $\sigma^{-1}(U)$ is proper.
The properness of $\sigma'$ is kept by Weil restriction, so that $\sigma'_k$ is proper with respect to the $R$-topology by \cite[Theorem 9.6]{DK2}. Finally $\sigma_k$ is proper with respect to the $R$-topology since that notion of properness is local on the target too by \cite[Proposition 5.7]{DK}.

If $\sigma$ is birational, there are Zariski open sets $\tilde U\subset \tilde X$ and $U\subset X$ such that $\sigma_{|\tilde U}$ is a bijection onto $U$. Then 
$$U(k)= \sigma_k({\tilde U(k)})\subset \sigma_k(\overline{\tilde U(k)}^R)=\sigma_k(\tilde X(k)),$$
the right hand side equality coming from Lemma \ref{lem-dense}. Finally $\sigma_k(\tilde X(k))$ is closed for the $R$-topology by properness of $\sigma_k$, so that taking the closure with respect to the $R$-topology gives the result by Lemma \ref{lem-dense}.
\end{proof}

\begin{lem}\label{lem-fini} Let $\sigma:\tilde X\to X$ be a finite morphism between algebraic varieties over $k$. Then $\sigma_k$ is closed with respect to the $R$-topology.
\end{lem}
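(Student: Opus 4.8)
The statement to prove is Lemma \ref{lem-fini}: a finite morphism $\sigma:\tilde X\to X$ between algebraic varieties over $k$ induces a map $\sigma_k$ that is closed with respect to the $R$-topology. The plan is to reduce to a local and semialgebraic situation where the closedness follows from a known property of semialgebraic maps between $R$-varieties, exactly as in the proof of Lemma \ref{lem-surj}.

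First I would observe that closedness can be checked locally on the target, since $X$ is covered by affine open subsets and a subset of $X(k)$ is closed for the $R$-topology if and only if its intersection with each chart is. So I may assume $X$ affine, and then $\tilde X$ is also affine because $\sigma$ is finite (hence affine). In particular the Weil restrictions $X_R$ and $\tilde X_R$ both exist. By the recalled properties of Weil restriction at the start of this section (item (4)), the finite morphism $\sigma$ induces a finite morphism $\sigma_R:\tilde X_R\to X_R$ between algebraic varieties over $R$, and the map $\sigma_k$ on $k$-points is identified with the corresponding map on $R$-points of $\sigma_R$.

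The heart of the argument is then to invoke that a finite morphism of $R$-varieties induces a map on $R$-points that is closed for the $R$-topology. A finite morphism is proper, so the induced semialgebraic map is proper with respect to the $R$-topology, meaning preimages of closed and bounded semialgebraic sets are closed and bounded, by \cite[Theorem 9.6]{DK2} (the same input used in Lemma \ref{lem-surj}). Moreover a finite morphism has finite fibers, so the image of a point is a finite set and the map is moreover bounded on bounded sets; combining properness with the fact that images of closed and bounded semialgebraic sets are closed and bounded \cite[Theorem 2.5.8]{BCR} yields that the image of any closed semialgebraic set is closed. Concretely, to show $\sigma_k(F)$ is closed for a closed semialgebraic $F$, I would take a point $x$ in its $R$-closure, use the Curve Selection Lemma \cite[Theorem 2.5.5]{BCR} to produce a semialgebraic curve into $\sigma_k(F)$ limiting to $x$, lift it through the proper map $\sigma_k$ to a bounded curve in $F$, and use properness plus closedness of $F$ to obtain a preimage of $x$ lying in $F$, whence $x\in\sigma_k(F)$.

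The main obstacle I anticipate is ensuring that the semialgebraic properness furnished by Weil restriction is genuinely the right notion to conclude closedness, since over a non-Archimedean real closed field closed and bounded does not mean compact and one cannot directly extract limits; the curve-lifting argument via the Curve Selection Lemma is precisely what replaces compactness here, and it must be set up carefully so that the lifted curve stays bounded (which is where properness of $\sigma_k$ enters). A secondary, more routine point is to justify that closedness for the $R$-topology is a local-on-the-target property and compatible with the gluing used to define the $R$-topology, which follows from \cite[Proposition 5.7]{DK} exactly as in the proof of Lemma \ref{lem-surj}.
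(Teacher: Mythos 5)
Your reduction steps (closedness is local on the target, pass to affine charts, apply Weil restriction, which preserves finiteness, to get a finite morphism $\sigma_R:\tilde X_R\to X_R$ of $R$-varieties) coincide with the paper's, whose own proof then simply quotes \cite[Theorem 4.2]{DK2} — a finite morphism between algebraic varieties over $R$ induces a closed map on $R$-points — and offers, as an alternative, "finite $\Rightarrow$ proper, then apply Lemma \ref{lem-surj}". Your plan is essentially this alternative route, spelled out. But there is a genuine gap in the way you spell it out: everything you do applies only to \emph{semialgebraic} closed sets. Both tools you invoke, the Curve Selection Lemma \cite[Thm. 2.5.5]{BCR} and the fact that images of closed and bounded semialgebraic sets are closed and bounded \cite[Thm. 2.5.8]{BCR}, require the sets involved to be semialgebraic, and over a non-Archimedean real closed field there is no sequential or compactness argument to pass from semialgebraic closed sets to arbitrary ones (closed and bounded is not compact — the very point you flag). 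The lemma, however, asserts that $\sigma_k$ is closed for the $R$-topology, i.e. that the image of \emph{every} closed subset of $\tilde X(k)$ is closed, and this stronger statement is what the paper actually uses (in Theorem \ref{thmFBgen}, to conclude that the inverse of a bijective $\pi_k$ is continuous). As written, you have proved only that $\sigma_k$ is semialgebraically closed.

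The gap can be repaired using precisely the finiteness of the fibers, as follows. Let $F\subset \tilde X(k)$ be an arbitrary closed set and suppose $x\notin \sigma_k(F)$. The fiber $\sigma_k^{-1}(x)$ is finite and disjoint from $F$, and since semialgebraic open sets form a basis of the $R$-topology, there is a semialgebraic open set $U\supset \sigma_k^{-1}(x)$ with $U\cap F=\emptyset$. Then $C=\tilde X(k)\setminus U$ is a closed semialgebraic set (complements of semialgebraic sets are semialgebraic since $\tilde X(k)$ is covered by finitely many affine charts) containing $F$, and $x\notin\sigma_k(C)$. By your semialgebraic case, $\sigma_k(C)$ is closed, so $X(k)\setminus \sigma_k(C)$ is an open neighborhood of $x$ disjoint from $\sigma_k(F)$; hence $x\notin\overline{\sigma_k(F)}^R$, proving closedness in general. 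With this supplement — and, in the curve-lifting step, an appeal to semialgebraic (definable) choice together with the existence of limits of bounded semialgebraic curves over an arbitrary real closed field, rather than any limit-extraction — your argument becomes a complete and self-contained proof of what the paper obtains by citation.
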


\begin{proof} By \cite[Theorem 4.2]{DK2}, a finite morphism between algebraic varieties over $R$ is closed with respect to the $R$-topology. The result follows since finiteness is local and Weil restriction preserves finite morphisms. 

Alternatively when $X$ and $\tilde X$ are irreducible, a finite morphism is proper, and apply Lemma \ref{lem-surj}.
\end{proof}

\subsection{Subintegrality and homeomorphisms}

We are now in position to generalize the characterization of subintegrality via homeomorphisms, as in \cite[Thm. 3.1]{Be}, over any algebraically closed field of characteristic zero. We also add a radicial property in the equivalences.

\begin{thm}\label{thmFB1} 
Let $\pi:Y\to X$ be a finite morphism between
algebraic varieties over $k$. The following
properties are equivalent:
\begin{enumerate}
\item $\pi$ is subintegral.
\item $\pi_{k}$ is bijective.
\item $\pi_{k}$ is a homeomorphism for the $R$-topology.
\item $\pi_{k}$ is a homeomorphism for the Zariski topology.
\item $\pi$ is a homeomorphism.
\item $\pi$ is radicial.
\end{enumerate}
\end{thm}

\begin{proof}
The equivalence between (4) and (5) is given by the Nullstellensatz. Using the Nullstellensatz, by Proposition \ref{lying-over} and proceeding similarly to Bernard's proof of \cite[Thm. 3.1]{Be} then we get the equivalence between (1), (2) and (5).

It is clear that (3) implies (2). The proof that (2) implies (3) is a direct consequence of Lemma \ref{lem-fini}. Indeed assuming (2), the map $\pi_k$ admits an inverse, and this inverse is continuous with respect to the $R$-topology since $\pi_k$ is a closed map by Lemma \ref{lem-fini}.

The equivalence between (1) and (6) is given by Proposition \ref{sat=semivar}.
\end{proof}

\subsection{Regulous functions on the $k$-rational points}\label{sect-CR}

We fix a real closed field $R$ such that $R[\sqrt{-1}]=k$. 

\subsubsection{Characterization of continuous rational functions}
The choice of $R$ induces a topology, hence a notion of continuity. We define continuous rational functions on an algebraic variety over $k$ as follows.

\begin{defn}\label{def-rat-cont} Let $X$ be an algebraic variety over $k$. Let $U\subset X$ be an open subset of $X$. A continuous rational function on $U(k)$ is a function from $U(k)$ to $k$ which is the continuous extension to $U(k)$ of a rational function on $X$, when $X$ is endowed with the $R$-topology.
\end{defn}

This notion comes initially from real algebraic variety \cite{Ku,KN,FHMM}, and has been studied also in complex algebraic geometry \cite{Be}. In the setting of Definition \ref{def-rat-cont}, it depends a priori on the choice of $R$.
A rational function that is continuous with respect to the $R$-topology can be characterized by the fact that it becomes regular after applying a relevant proper birational map.

\begin{prop} Let $X$ be an algebraic variety over $k$. Let $f:X(k)\to k$ be an everywhere defined function, and assume that $f$ coincides with a regular function on a Zariski open subset of $X(k)$.

 Then, $f$ is continuous with respect to the $R$-topology if and only if there is a proper birational map $\sigma:\tilde X\to X$ such that $f\circ \sigma_k :\tilde X(k)\to k$ is regular. 
\end{prop}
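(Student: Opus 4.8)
The plan is to prove the two implications separately, the necessity being the substantial one. Throughout I use that a regular function, and more generally a morphism to $\PP^1$, induces an $R$-continuous map on $k$-points, and that a proper morphism induces a map that is closed for the $R$-topology (as already exploited in Lemmas \ref{lem-surj} and \ref{lem-fini}).

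\emph{Sufficiency.} Suppose such a $\sigma:\tilde X\to X$ exists. Then $f\circ\sigma_k$ is $R$-continuous, being regular. The map $\sigma_k$ is closed for the $R$-topology since $\sigma$ is proper, and it is surjective: its image is $R$-closed and contains $U(k)$ for the dense open $U\subset X$ over which $\sigma$ is an isomorphism, while $U(k)$ is $R$-dense in $X(k)$ by applying Lemma \ref{lem-dense} to each irreducible component of $X$. A closed continuous surjection is a quotient map, so for any $R$-closed $A\subset k$ the set $f^{-1}(A)=\sigma_k\bigl((f\circ\sigma_k)^{-1}(A)\bigr)$ is the image of a closed set under a closed map, hence closed; thus $f$ is continuous.

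\emph{Necessity.} Conversely assume $f$ is continuous. We may assume $X$ irreducible, since in general one applies the result to each irreducible component $X_i$ (on which $f$ restricts to a continuous rational function) and takes the disjoint union of the resulting maps $\tilde X_i\to X_i\hookrightarrow X$, which is again proper and birational. For irreducible $X$, the rational function $f$ defines a rational map $\phi:X\dashrightarrow \PP^1$, equal to $[f:1]$ on the dense open $U$ where $f$ is regular. Let $\tilde X\subset X\times_k\PP^1$ be the closure of the graph of $\phi_{|U}$, and let $\sigma:\tilde X\to X$ and $g:\tilde X\to\PP^1$ be the two projections. Then $\sigma$ is proper (a closed subscheme of $X\times\PP^1$, which is proper over $X$) and birational (an isomorphism over $U$), while $g$ is a morphism with $g=f\circ\sigma$ as rational functions; in particular $g_k(\tilde x)=f(\sigma_k(\tilde x))\in k$ for every $\tilde x$ in $\tilde U:=\sigma^{-1}(U)$.

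\emph{Crux.} It remains to see that $g_k$ never takes the value $\infty\in\PP^1(k)$. Suppose $g_k(\tilde x_0)=\infty$ for some $\tilde x_0\in\tilde X(k)$. Since $\tilde U(k)$ is $R$-dense in $\tilde X(k)$ by Lemma \ref{lem-dense}, the Curve Selection Lemma \cite[Thm. 2.5.5]{BCR} provides a continuous semialgebraic curve $\tilde\gamma:[0,1]\to\tilde X(k)$ with $\tilde\gamma(0)=\tilde x_0$ and $\tilde\gamma(t)\in\tilde U(k)$ for $t\in(0,1]$. On one hand $g_k$ is $R$-continuous, so $g_k(\tilde\gamma(t))\to\infty$ as $t\to 0$; on the other hand $g_k(\tilde\gamma(t))=f(\sigma_k(\tilde\gamma(t)))$ for $t>0$, and since $\sigma_k$ is continuous and $f$ is continuous at $\sigma_k(\tilde x_0)$, this converges to the finite value $f(\sigma_k(\tilde x_0))\in k$. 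As $\PP^1(k)$ is Hausdorff for the $R$-topology, this is a contradiction. Hence $g^{-1}(\infty)$ is a closed subscheme of $\tilde X$ with no $k$-point, thus empty by the Nullstellensatz, so $g$ factors through $\Af^1=k$ and is a regular function. Finally $f\circ\sigma_k$ and $g_k$ are continuous and agree on the dense set $\tilde U(k)$, hence coincide, so $f\circ\sigma_k=g_k$ is regular.

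The main obstacle is the crux: converting the pointwise continuity of $f$ into the global statement that the resolved morphism $g$ avoids $\infty$. This is precisely where the theory of the $R$-topology enters, through the density of the regular locus (Lemma \ref{lem-dense}), the Curve Selection Lemma, and the continuity and surjectivity of proper birational maps (Lemma \ref{lem-surj}); the Nullstellensatz then upgrades the set-theoretic conclusion to the scheme-theoretic factorization through $\Af^1$.
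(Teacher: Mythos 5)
Your proof is correct, and its overall skeleton is the paper's: for sufficiency, closedness and surjectivity of $\sigma_k$ deduced from properness together with $R$-density of a dense Zariski open set (Lemmas \ref{lem-surj} and \ref{lem-dense}); for necessity, extend $f$ to a $\PP^1(k)$-valued regular map on a proper birational modification $\sigma:\tilde X\to X$ and show the extension never takes the value $\infty$. Two steps are genuinely different. For the modification, the paper resolves the indeterminacy of the rational function by a sequence of blowings-up along nonsingular centers (Hironaka), while you take the closure of the graph in $X\times_k\PP^1$; your construction is more elementary, avoids resolution of singularities altogether, and gives the properness and birationality of $\sigma$ for free. To exclude the value $\infty$, the paper argues globally: $f\circ\sigma_k$ and $g_k$ are both continuous and agree on a Zariski dense, hence $R$-dense (Lemma \ref{lem-dense}), subset of $\tilde X(k)$, so they coincide everywhere by Hausdorffness, and therefore $g_k$ takes values in $k$; you instead argue pointwise, by contradiction at a hypothetical pole, using the Curve Selection Lemma. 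The two arguments have the same mathematical content (density of $\tilde U(k)$ plus uniqueness of limits), but the paper's version is shorter and yields the equality $f\circ\sigma_k=g_k$ at once, an equality you must re-derive at the end by exactly that density argument. Your explicit reduction to irreducible $X$ replaces the paper's citation of \cite[Lem. 4.4]{Be}, and your sufficiency step reproves the surjectivity part of Lemma \ref{lem-surj} rather than quoting it; both are fine. Finally, note that both you and the paper rely on semialgebraic properness of $\sigma_k$ making it a closed map for arbitrary (not only semialgebraic) $R$-closed sets, so on that delicate point your proof is exactly as rigorous as the paper's.
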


\begin{proof}
Arguing similarly to \cite[Lem. 4.4]{Be}, we may assume $X$ is irreducible.
Assume $f$ to be continuous, and denote by $g$ the rational function on $X$ that coincides with $f$ on a Zariski open subset of $X(k)$. One can resolve the indeterminacy of the rational map $g$ by a sequence of blowings-up along nonsingular centers, giving rise to a proper birational morphism $\sigma:\tilde X\to X$ such that $g\circ \sigma_k :\tilde X(k)\to \PP^1(k)$ is regular. The functions $f\circ \sigma_k$ and $g\circ \sigma_k$ are equal on a Zariski dense subset of $\tilde X(k)$, so they are equal on a subset dense with respect to the $R$-topology by Lemma \ref{lem-dense}. Therefore they coincide on $\tilde X(k)$ by continuity. As a consequence the regular function $g\circ \sigma_k$ takes its values in $k$ rather than in $\PP^1(k)$.

Conversely, let $C\subset k$ be a closed subset with respect to the $R$-topology. The set $(f\circ \sigma_k)^{-1}(C)$ is closed by continuity of $f\circ \sigma_k$, and its image under $\sigma_k$ is equal to $f^{-1}(C)$ by surjectivity of $\sigma_k$ via Lemma \ref{lem-surj}. As a consequence $f^{-1}(C)$ is closed by properness of $\sigma_k$ with respect to the $R$-topology, thanks to Lemma \ref{lem-surj} again.
\end{proof}

Note that the characterization of continuity given above, via a resolution of indeterminacy, does not refer to the choice of $R$. In particular, the continuity with respect to the $R$-topology of a rational function is independent of the choice of the real closed field $R$.

\vskip 2mm
 
Let $U\subset X$ be an open subset of $X$. The continuous rational functions on $U(k)$ are the sections of a presheaf of $k$-algebras on $X(k)$, denoted by $\K_{X(k)}^0$ in the sequel. Since $\K$ is a sheaf, and the presheaf of locally continuous functions on $X(k)$ for the $R$-topology is also a sheaf for the Zariski topology, then $\K_{X(k)}^0$ is a sheaf called the sheaf of continuous rational functions. It makes $(X(k),\K_{X(k)}^0)$ a ringed space.
In case $X$ is affine then we simply denote by $\SR(X(k))$ the global sections of $\K_{X(k)}^0$ on $X(k)$.
A dominant morphism $\pi:Y\to X$ between varieties over $k$ induces an extension $\K_{X(k)}^0\to (\pi_{k})_*\K_{Y(k)}^0$, hence a morphism
$(Y(k),\K_{Y(k)}^0)\to (X(k),\K_{X(k)}^0)$ of ringed spaces.

An important fact is that continuous rational functions on a normal variety are regular. More precisely, next proposition says that continuous rational functions are integral on the regular ones and thus are already regular if the variety is normal.

The set of indeterminacy points of a continuous rational function is related to the normal locus of the ambient variety. 

\begin{prop}\label{prop-lieunorm} Let $X$ be an algebraic variety over $k$.
We have:
\begin{enumerate}
\item $\K_{X(k)}^0\subset (\pi_{k}^\prime)_* \SO_{X'(k)}$ where $\pi':X'\to X$ is the normalization map.
\item If $x$ is a normal closed point of $X(k)$ then $\K_{X(k),x}^0=\SO_{X,x}$.
\end{enumerate}
\end{prop}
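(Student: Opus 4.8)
The plan is to deduce both assertions from the preceding proposition characterizing continuity, which is precisely the device that converts the topological hypothesis into the algebraic statement that $f$ becomes regular after a proper birational modification. Both statements being local on $X$, and reducing to the irreducible case exactly as in that proposition, I would fix an affine open $U\subset X$ and a continuous rational function $f$, viewed as a section of $\K_{X(k)}^0$ over $U$. The inclusions $\SO_X(U)\subset \SR(U(k))$ and, at stalks, $\SO_{X,x}\subset \K_{X(k),x}^0$ are immediate, since a regular function is in particular continuous rational. The whole content lies in the reverse inclusions, and I would first establish 1), from which 2) follows formally.

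For 1), I would apply the preceding proposition to obtain a proper birational morphism $\sigma:\tilde X\to U$ such that $f\circ\sigma_k$ is regular on $\tilde X(k)$. Since $\sigma$ is proper, the pushforward $\sigma_*\SO_{\tilde X}$ is a coherent sheaf of $\SO_U$-algebras, and since $\sigma$ is birational it is contained in the sheaf $\K$ of total fractions on $U$. A coherent, hence finite, $\SO_U$-submodule of $\K$ consists of elements integral over $\SO_U$, so that $f$ is integral over $\SO_X$ on $U$. By the very definition of the normalization, the integral closure of $\SO_X(U)$ in $\K(X)$ is $\SO_{X'}((\pi')^{-1}(U))$; therefore $f$ extends to a regular function on $(\pi')^{-1}(U)$. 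This is exactly the desired inclusion $\K_{X(k)}^0\subset (\pi_k')_*\SO_{X'(k)}$.

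For 2), I would use that a normal point is untouched by normalization: if $x$ is normal then $\SO_{X,x}$ is integrally closed, so $\pi'$ has a unique preimage $x'$ of $x$ with $\SO_{X',x'}=\SO_{X,x}$, and $\pi'$ restricts to an isomorphism over a Zariski neighborhood of $x$. Consequently the stalk $((\pi_k')_*\SO_{X'(k)})_x$ equals $\SO_{X,x}$. Taking stalks at $x$ in the sheaf inclusion of 1) then yields $\K_{X(k),x}^0\subset \SO_{X,x}$, and together with the trivial reverse inclusion this gives $\K_{X(k),x}^0=\SO_{X,x}$.

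The genuinely topological input is entirely absorbed into the preceding proposition: continuity with respect to the $R$-topology forces $f$ to become regular after resolving its indeterminacy, via Lemmas \ref{lem-dense}, \ref{lem-surj} and \ref{lem-fini}. Granting that, the only remaining obstacle is the passage from ``regular after a proper birational modification'' to ``integral over $\SO_X$'', which I would handle through the coherence of the proper pushforward together with the normality criterion identifying the integral closure with $(\pi')_*\SO_{X'}$. This mirrors exactly the complex case of Proposition \ref{propratcont}, replacing the strong topology by the $R$-topology.
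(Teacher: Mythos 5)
Your proof is correct, but it takes a genuinely different route from the paper's. The paper's own proof is essentially a transfer argument: it reduces to the affine case and invokes Bernard's complex proof \cite[Prop. 4.7, Prop. 4.10]{Be}, observing that the only two complex-specific ingredients there survive over $k$ --- the density of Zariski-dense open sets for the strong topology is replaced by Lemma \ref{lem-dense}, and Hartogs' lemma on normal varieties still holds because $\dim(X(k)\setminus \Reg(X(k)))\leq \dim(X(k))-2$. You instead give a self-contained argument: part 1) is deduced from the (unlabelled) proposition characterizing $R$-continuity by the existence of a proper birational $\sigma$ with $f\circ\sigma_k$ regular, combined with Grothendieck's finiteness theorem (coherence of $\sigma_*\SO_{\tilde X}$ for $\sigma$ proper) and the determinant trick (an element of a finite ring extension inside $\K$ is integral over $\SO_X(U)$); part 2) then follows formally by taking stalks, using that $((\pi'_k)_*\SO_{X'(k)})_x=(\SO_{X,x})'=\SO_{X,x}$ at a normal point. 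What your route buys: no appeal to Hartogs or to valuation-theoretic arguments at codimension-one points, no need to re-inspect the proof in \cite{Be}, and 2) becomes a purely formal consequence of 1) rather than a separate statement (the paper cites \cite[Prop. 4.10]{Be} for it). What it costs: you import the finiteness theorem for proper morphisms, a heavier general tool the paper never uses, whereas the paper's citation-based proof is shorter and keeps the treatment exactly parallel to the complex case of Proposition \ref{propratcont}. Two small points you leave implicit, both harmless: identifying $f\circ\pi'_k$ with the regular function produced by integrality requires one more density-plus-continuity argument (Lemma \ref{lem-dense}), and the stalk computation in 2) uses that integral closure commutes with localization.
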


\begin{proof}
The properties are local so we may assume $X$ is affine.
The original proof in \cite[Proposition 4.7]{Be} uses only one argument related to the complex setting. It is the density with respect to the strong topology of a Zariski dense open set, that can be replaced by Lemma \ref{lem-dense}. Note that Hartogs Lemma used in the proof is valid over $k$ : if $X$ is normal then the restriction map $\SO(X(k))\to \SO(\Reg(X(k)))$ is surjective since $$\dim(X(k)\setminus \Reg(X(k)))\leq \dim (X(k))-2$$
by \cite[p. 124]{Iit}.
\end{proof}

In the remaining of this section, we show how seminormalization and continuous rational functions are in relation for varieties over $k$. We begin with a description of regular functions on the relative seminormalization in terms of regular functions on the relative normalization.

\begin{prop}
\label{constant}
Let $Y\to X$ be a dominant morphism between algebraic varieties over $k$. Let $U$ be an open subset of $X$. Then
$$\SO_{X^{+}_Y}((\pi^+)^{-1}(U))=\{ f\in \SO_{X'_Y}((\pi')^{-1}(U))\mid \,f {\rm \,is\,constant\, on \,the\,fibers\,of}\,\pi'_{k}\}$$
where $\pi':X'_Y\to X$ (resp. $\pi^+:X^+_Y\to Y$) is the relative normalization (resp. seminormalization) morphism.
\end{prop}

\begin{proof}
By \cite[Cor. 3.7]{Be} (which is written in the case $U$ is affine, but all section 3 there is valid for any open subset $U$), we have 
%$k[X^{+}_Y]=k[X^{+,max}_Y]$ where
%$$k[X^{+}_Y]=\{f\in k[X'_Y]\mid \forall x\in X(k) \,\,f_x\in\SO_{X,x}+\JRad(\SO_{X'_Y,x})\}$$
$$\SO_{X^{+}_Y}((\pi^+)^{-1}(U))=\{f\in \SO_{X'_Y}((\pi')^{-1}(U))\mid \forall x\in U(k), \,\,f\in \SO_{X,x}+\JRad(\SO_{X'_Y,x})\}$$
The radical being an intersection of maximal ideals, we see that the functions in $\SO_{X^{+}_Y}((\pi^+)^{-1}(U))$ correspond to the elements of $ \SO_{X'_Y}((\pi')^{-1}(U))$ constant on the fibers of $\pi'_{k}$.
\end{proof}

We give a characterization of the structural sheaf of the seminormalization of an algebraic variety over $k$ in another one with continuous rational functions generalizing the main result in \cite{Be} to the relative seminormalization and over any algebraically closed field of characteristic zero. In order to state the result, we use the fiber product of two sheaf extensions.

\begin{thm}
\label{thmintclosregulu}
Let $\pi:Y\to X$ be a dominant morphism between algebraic varieties over $k$. 
Then $$(\pi^+_{k})_*\SO_{X^{+}_Y(k)}=\K_{X(k)}^0\times_{(\pi_{k})_*\K_{Y(k)}^0}(\pi_{k})_*\SO_{Y(k)} $$
where $\pi^+:X^+_Y\to X$ is the relative seminormalization morphism.
\end{thm}

\begin{proof}
We may assume $X$ and $Y$ are affine and thus we want to prove that 
$$k[X^{+}_Y]=\SR( X(k))\times_{\SR(Y(k))}k[Y],$$
where the right hand side stands for the fiber product of the rings.
Let $\pi':X_Y^\prime\to X$ be the relative normalization map.

We consider the following diagram $$\begin{array}{ccccc}
	k[X]&\rightarrow  & k[X_Y']&\rightarrow &k[Y]\\
	\downarrow&&\downarrow&& \downarrow \\
	\SR(X(k))&\rightarrow &\SR(X'_Y(k))& \rightarrow&\SR(Y(k))\\
\end{array}$$
%where the horizontal maps from the bottom are given by composition with the restrictions to closed points of the morphisms associated to the horizontal extensions of the top.
where the horizontal maps from the top (resp. the bottom) are given by composition with respectively $\pi'$ and $Y\to X_Y^\prime$ (resp. $\pi_{k}^\prime$ and $Y(k)\to X_Y^\prime(k)$).

We have $k[X^{+}_Y]\subset k[Y]$ by definition. Since $\pi^+$ is subintegral then it follows from Theorem \ref{thmFB1} that $\pi^+_k$ is an homeomorphism with respect to the $R$-topology. Since $\pi^+$ is in addition birational, the composition by $\pi_k^+$ gives an isomorphism between $\K^0(X(k))$ and $\K^0(X_Y^+(k))$. Therefore we get $k[X^{+}_Y]\subset \K^0(X_Y^+(k))=\K^0(X(k))$. In particular $k[X^{+}_Y]\subset\SR( X(k))\times_{\SR(Y(k))}k[Y]$.

%there exist a non-empty Zariski open subset $U\subset X(k)$ and $g\in\SO(U)$ such that $g\circ(\pi^+_k)_{\mid (\pi^+_k)^{-1}(U)}=f$ on $(\pi^+_k)^{-1}(U)$. Since $h\circ\pi^+_k=f$ on $X^+(k)$ and $\pi^+_k$ is bijective then $h$ is rational on $X(k)$, and so  $h\in \K^0(X(k))$. It follows that the injective morphism 
%$$\K^0(X(k))\subset k[X^+_Y],~~~\hspace{.5cm}h\mapsto h\circ\pi^+_k$$ is surjective and thus $f\in\SR(X(k))$ (more precisely, as a function on $X^+_Y(k)$, the function $f$ is equal to the composition of a continuous rational function on $X(k)$ with $X^+_Y(k)\to X(k)$). In particular $k[X^{+}_Y]\subset\SR( X(k))\times_{\SR(Y(k))}k[Y]$.

Let us prove the converse inclusion. Let $f\in\SR( X(k)))\times_{\SR(Y(k))}k[Y]$. The continuous rational  function $f$ is integral over $k[X]$ by Proposition \ref{prop-lieunorm}, therefore $f\in k[X'_Y]$ since additionally $f\in k[Y]$. As a function on $X'_Y(k)$, the function $f$ is constant on the fibers of $X'_Y(k)\to X(k)$ since $f$ induces a continuous function on $X(k)$. By Proposition \ref{constant}, we obtain then $f\in k[X^+_Y]$. It gives the reverse inclusion 
$\SR( X(k))\times_{\SR(Y(k))}k[Y]\subset k[X^+_Y]$.
\end{proof}

 A continuous rational function on a normal algebraic variety over $k$ is a regular function by Proposition \ref{prop-lieunorm}. The fact that the normalization of $X$ in $Y$ is not necessarily normal imposes to take the fiber product with $\SO_{Y(k)}$ in Theorem \ref{thmintclosregulu}. We state as a theorem the particular case $Y=X'$, the statement becoming much simpler by Proposition \ref{prop-lieunorm}. It says that the ring of continuous rational functions is isomorphic to the ring of regular functions on the seminormalization, it generalizes the main result in \cite{Be} over any algebraically closed field of characteristic zero.

\begin{thm}
\label{caractK0}
Let $X$ be an algebraic variety over $k$. The ringed space $(X^+(k),\SO_{X^{+}(k)})$ is isomorphic to $(X(k),\K_{X(k)}^0)$.
\end{thm}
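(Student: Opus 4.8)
The plan is to reduce to the affine case and identify the two structure rings directly, following the template of Corollary \ref{caractK0first} but feeding in the $R$-topology replacements proved above in place of the strong-topology inputs used over $\C$. The comparison is induced by the seminormalization morphism $\pi^+:X^+\to X$, which is finite (it sits in $k[X]\subset k[X^+]\subset k[X']$ with $k[X']$ finite over the Noetherian ring $k[X]$) and subintegral; hence by Theorem \ref{thmFBgen} the map $\pi^+_k:X^+(k)\to X(k)$ is a homeomorphism. This lets me identify the underlying spaces, so that what remains is the equality of sheaves $(\pi^+_k)_*\SO_{X^+(k)}=\K^0_{X(k)}$ on $X(k)$. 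Since this is local on $X$, I would check it on each affine open $U\subset X$, compatibly with restriction, which amounts to the ring equality $k[(\pi^+)^{-1}(U)]=\SR(U(k))$; after reducing to irreducible components, it suffices to treat $U=X$ affine and irreducible and prove $k[X^+]=\SR(X(k))$ by double inclusion.

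For $k[X^+]\subset\SR(X(k))$: any $f\in k[X^+]$ is regular on $X^+$, hence continuous for the $R$-topology, and since $\pi^+$ is birational, $f$ is rational on $X$. Transporting $f$ along the homeomorphism $\pi^+_k$ gives $f\circ(\pi^+_k)^{-1}:X(k)\to k$, which is continuous (inverse of a homeomorphism composed with a continuous map) and coincides with a regular function on a dense Zariski open; thus it lies in $\SR(X(k))$.

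For the reverse inclusion $\SR(X(k))\subset k[X^+]$, let $\pi':X'\to X$ be the normalization and take $f\in\SR(X(k))$. By Proposition \ref{prop-lieunorm}(1), $f$ corresponds to a regular function $F\in k[X']$. The decisive step is to show $F$ is constant on the fibers of $\pi'_k$: the regular (hence $R$-continuous) function $F$ and the $R$-continuous function $f\circ\pi'_k$ agree on the preimage of the locus where $\pi'$ is an isomorphism and $f$ is regular, a nonempty Zariski open, which is $R$-dense by Lemma \ref{lem-dense}; by continuity $F=f\circ\pi'_k$ everywhere, so $F$ is constant on the fibers of $\pi'_k$. Proposition \ref{constant}, applied with $Y=X'$, then yields $F\in k[X^+]$, and unwinding the factorization $X'\to X^+\to X$ (using surjectivity of $X'(k)\to X^+(k)$) shows that $F$ corresponds to $f$ under the identification $X^+(k)\cong X(k)$.

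The hard part will be the constancy-on-fibers argument in the reverse inclusion: over $\C$ it rested on strong-topology density and continuity, and here it must be carried out for the $R$-topology. This is exactly what Lemma \ref{lem-dense} and the centrality of $X(k)$ (Proposition \ref{prop-cent}) are designed to supply, together with the fact that the ring $\SR(X(k))$ does not depend on the chosen real closed field $R$. The remaining points — finiteness and subintegrality of $\pi^+$, naturality in $U$ so that the local ring isomorphisms assemble into a sheaf isomorphism, and the reduction to irreducible components — are routine, so I expect no further obstruction once the fiberwise constancy is secured.
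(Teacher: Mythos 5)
Your proposal is correct and takes essentially the same route as the paper's own proof: the same reduction to the affine irreducible case, the same description of $k[X^+]$ as the functions on $X'$ constant on the fibers of $\pi'_k$ via Proposition \ref{constant}, the inclusion $\SR(X(k))\subset k[X^+]$ via Proposition \ref{prop-lieunorm}, and the reverse inclusion via Theorem \ref{thmFBgen} applied to the finite subintegral morphism $\pi^+$ together with its birationality. The only cosmetic difference is your explicit $R$-density argument identifying $F$ with $f\circ\pi'_k$, which is already implicit in the meaning of the inclusion $\K_{X(k)}^0\subset(\pi'_k)_*\SO_{X'(k)}$.
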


\begin{rem}
Since $X^+$ doesn't depend of the choice of the real closed field, Theorem \ref{caractK0} gives another way to check that the continuity property of a rational function does not depend on the chosen real closed field.
\end{rem}

\subsubsection{Regulous functions and homeomorphisms}

A regulous function $f$ is a continuous rational function that satisfies the additional property that $f$ remains rational by restriction to any subvariety.
The first author proved that it is always the case \cite[Prop. 4.14]{Be} for complex varieties, contrarily to the real case \cite{KN}.

The following result asserts that continuous rational functions are always regulous.
\begin{cor}\label{cor-reg}
Let $X$ be an algebraic variety over $k$ and let $f\in\K^0(X(k))$. For any Zariski closed subset $V$ of $X$, the restriction $f_{\mid V(k)}$ belongs to $\K^0(V(k))$.
\end{cor}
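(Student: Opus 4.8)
The plan is to reduce to the affine case and transport the problem to the seminormalization of $X$, where $f$ is regular. Since $\K^0$ is a sheaf, I may work on an affine open cover of $X$ and assume $X$ affine; and since both the rationality and the continuity of a function can be tested on each irreducible component, I may further assume $V$ irreducible. The genuine content of the statement is the rationality of $f_{\mid V(k)}$ on $V$: a priori $f$ only agrees with a regular function on some dense open $U\subset X$, and nothing prevents $V$ from lying inside $X\setminus U$, that is, inside the locus where $f$ fails to be regular. By Theorem \ref{caractK0} applied to the affine variety $X$, this difficulty disappears upstairs: $f$ is the pushforward along the seminormalization morphism $\pi^+:X^+\to X$ of the regular function $g=f\circ\pi^+_k\in k[X^+]$, so that $f=g\circ(\pi^+_k)^{-1}$, using that $\pi^+_k$ is bijective.

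Next I would restrict the seminormalization to the preimage of $V$. Set $W=(\pi^+)^{-1}(V)$, endowed with its reduced structure; it is a closed subvariety of $X^+$, and $g_{\mid W(k)}$ is regular on $W(k)$ as the restriction of $g$ to a closed subvariety. The restricted morphism $\rho=\pi^+_{\mid W}:W\to V$ is finite, being a restriction of the finite morphism $\pi^+$, and $\rho_k:W(k)\to V(k)$ is bijective because $\pi^+_k$ is; with $V$ irreducible, $W$ is then irreducible as well. By Theorem \ref{thmFBgen} a finite morphism that is bijective on $k$-points is subintegral, so $\rho$ is subintegral, hence birational, and moreover $\rho_k$ is a homeomorphism for the $R$-topology.

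I would then set $\phi=g_{\mid W(k)}\circ\rho_k^{-1}:V(k)\to k$ and check that $\phi\in\K^0(V(k))$ and that $\phi=f_{\mid V(k)}$. Continuity of $\phi$ is free, since it is the composite of the continuous function $g_{\mid W(k)}$ with the homeomorphism $\rho_k^{-1}$. Rationality of $\phi$ on $V$ comes from the birationality of $\rho$: over a dense Zariski open $V_0\subset V$ the morphism $\rho$ restricts to an isomorphism, and there $\phi$ agrees with a regular function, so $\phi$ is rational on $V$; hence $\phi\in\K^0(V(k))$. The equality $\phi=f_{\mid V(k)}$ is a pointwise check: for $v\in V(k)$ the unique preimage $(\pi^+_k)^{-1}(v)$ lies in $W(k)$ and equals $\rho_k^{-1}(v)$, whence $f(v)=g\big((\pi^+_k)^{-1}(v)\big)=\phi(v)$. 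I expect the main obstacle to be precisely the rationality of the restriction, the feature that separates the situation over $k$ from the real case of \cite{KN}; it is overcome exactly by passing to the seminormalization, where $f$ becomes regular, and by observing that subintegrality — and with it birationality and the homeomorphism property — is inherited by the restriction $\rho$.
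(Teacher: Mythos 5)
Your proof is correct and takes essentially the same route as the paper: the paper's own proof just invokes \cite[Prop. 4.14]{Be} ``verbatim using Theorem \ref{caractK0}'', and that imported argument is precisely what you spell out — use Theorem \ref{caractK0} to turn $f$ into a regular function $g$ on the seminormalization, restrict to the (reduced) preimage $W$ of $V$, and apply Theorem \ref{thmFBgen} to the finite, bijective-on-$k$-points morphism $W\to V$ to get subintegrality, hence birationality and an $R$-homeomorphism, which pushes $g_{\mid W(k)}$ down to a continuous rational function equal to $f_{\mid V(k)}$. The only step left implicit (irreducibility of $W$, used for birationality) is a standard one-line consequence of $\rho$ being finite and $\rho_k$ bijective, so there is no gap.
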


\begin{proof}
The proof of \cite[Proposition 4.14]{Be} works verbatim using Theorem \ref{caractK0}.
\end{proof}

In the sequel, we also say regulous for continuous rational.

\begin{defn}
Let $X$ and $Y$ be algebraic varieties over $k$. Let $E\subset X(k)$ and $F\subset Y(k)$ be two closed subsets for the $R$-topology.
We say that a map $h:E\to F$ is biregulous if it is a homeomorphism for the $R$-topology which is birational i.e there exist two dense Zariski open subsets $U_1\subset \overline{E}^Z$, $U_2\subset \overline{F}^Z$, a birational map $g:\overline{E}^Z\to\overline{F}^Z$ with $g_{\mid U_1}:U_1\to U_2$ an isomorphism such that $g=h$ by restriction to $U_1\cap E$. 
In this situation, we say that $E$ and $F$ are biregulous or biregulously equivalent.
\end{defn}

In this situation and if in addition $E=Y(k)$, $F=X(k)$ and $h$ is an homeomorphism for the Zariski topology, so that the morphism $\K_{X(k)}^0\to (h)_*\K_{Y(k)}^0$ is well-defined, then this morphism is an isomorphism and thus the ringed spaces $(Y(k),\K_{Y(k)}^0)$ and $(X(k),\K_{X(k)}^0)$ are isomorphic.
The following theorem is an extended version of Theorem \ref{thmFB1} and it explains how subintegral extensions, continuous rational functions and biregulous morphisms are related.
It is a generalization of \cite[Thm. 3.1, Prop. 4.12]{Be} with an additional radiciality property. 

\begin{thm}\label{thmFB}
Let $\pi:Y\to X$ be a finite morphism between
algebraic varieties over $k$. The following
properties are equivalent:
\begin{enumerate}
\item $\pi$ is subintegral.
\item $\pi_{k}$ is bijective.
\item  $\pi_{k}$ is biregulous.
%\item[4)] For all $g\in\C[Y]$ there exists $f\in\SR(X(\C))$ such that $f\circ\pi_{\C}=g$ on $Y(\C)$.
\item The ringed spaces $(Y(k),\K_{Y(k)}^0)$ and $(X(k),\K_{X(k)}^0)$ are isomorphic.
\item $\pi_{k}$ is a homeomorphism for the strong topology.
\item $\pi_{k}$ is a homeomorphism for the Zariski topology.
\item $\pi$ is a homeomorphism.
\item $\pi$ is radicial.
\end{enumerate}
\end{thm}

\begin{proof}
We already have the equivalence of (1), (2), (5), (6), (7) and (8) by Theorem \ref{thmFB1}. 

Assuming $\pi$ to be subintegral, then it follows from Proposition \ref{propCSEPvariety}
that  $X$ and $Y$ have the same  seminormalization. So $\K_{X(k)}^0\to (\pi_k)_*\K_{Y(k)}^0$ is an isomorphism by Theorem \ref{caractK0}. It shows (1) implies (4).

We show (4) implies (1) by contradiction. Assume $\pi_k$ is not bijective. We can suppose $X$ and $Y$ are affine.
We may separate two different points $y,y'$ in the fibre $\pi^{-1}_k(x)$ of some $x\in X(k)$ by a regular function $f$ on $Y(k)$. But such a function is continuous with respect to the $R$-topology, and does not belong to the image of $\K^0(X(k))\to \K^0(Y(k))$ (given by the composition by $\pi_k$) since it is not constant on the fibres of $\pi_k$.

A biregulous map is bijective so (3) implies (2), and conversely by (4) we know that the inverse of $\pi_k$ is continuous rational, so regulous by Corollary \ref{cor-reg}.

\end{proof}

Remark also that a morphism satisfying the conditions of Theorem \ref{thmFB} is (bijective thus) automatically birational.

\vskip 2mm

We prove now that the seminormalization determines a variety up to biregulous equivalence. This result goes in the direction of the problems considered by Koll\'ar \cite{Ko}, \cite{KMOS}, \cite{Ce}.

Before that we need to prove that a biregulous map between algebraic varieties over $k$ is a homeomorphism for the Zariski topology.

Let $X$ be an algebraic variety over $k$ and let $E\subset X(k)$. We denote by $\overline{E}$ the closure of $E$ for the $R$-topology. Recall that a locally closed subset of $X$ or $X(k)$ is the intersection of a Zariski open subset with a Zariski closed subset and that a Zariski constructible subset is a finite union of locally closed subsets.

\begin{prop}
\label{biregZhomeo}
Let $X$ and $Y$ be algebraic varieties over $k$ and let $h:Y(k)\to X(k)$ be a biregulous map. Then $h$ is a homeomorphism for the Zariski topology.
\end{prop}

\begin{proof}
Let $Z$ be an irreducible closed subset of $Y(k)$. By restriction we get a biregulous map $Z\to h(Z)$. We claim that $h(Z)$ is a Zariski closed subset of $X(k)$. Since $h$ is biregulous then $h(Z)$ is closed in $X(k)$ for the $R$-topology. To prove the claim then it is sufficient to assume that $\overline{h(Z)}^Z=X(k)$, $X$ is irreducible and thus we have to prove that $h:Z\to X(k)$ is surjective.
Since a regulous function $f$ on $Z$ is still regulous (and thus rational) by restriction then there exists a stratification of $Z$ in locally closed strata such that the restriction of $f$ to each stratum is regular (see \cite{FHMM} and \cite{Mnew}). It follows that $h(Z)$ is a Zariski constructible set and thus $h(Z)=\cup_{i=1}^n V_i\cap U_i$ such that, for $i=1,\ldots,n$, $V_i$ (resp. $U_i$) is a Zariski closed (resp. open) subset of $X(k)$. Since $\overline{h(Z)}^Z=X(k)$ and $X$ is irreducible then we may assume $V_1=X(k)$.
We have $\overline{U_1}\subset \overline{h(Z)}=h(Z)$ and thus $h:Z\to X(k)$ is surjective by Lemma \ref{lem-dense}.

It shows that $h^{-1}$ is continuous for the Zariski topology. Similarly, $h$ is continuous for the Zariski topology.
\end{proof}

\begin{thm}
\label{QKoC}
Let $X$ and $Y$ be algebraic varieties over $k$. Then, $X(k)$ and $Y(k)$ are biregulously equivalent if and only if $X^+$ and $Y^+$ are isomorphic.
\end{thm}

\begin{proof}
Assume $h:Y(k)\to X(k)$ is a biregulous map. From Proposition \ref{biregZhomeo} $h$ is a homeomorphism for the Zariski topology and thus the ringed spaces $(Y(k),\K_{Y(k)}^0)$ and $(X(k),\K_{X(k)}^0)$ are isomorphic. By Theorem \ref{caractK0} it follows that $X^+$ and $Y^+$ are isomorphic.

Since $X^+(k)\to X(k)$ and $Y^+(k)\to Y(k)$ are biregulous morphisms then we easily get the converse implication.
\end{proof}

\begin{rem}
Note that in the statement of the previous theorem, we do not require that there is a morphism from $X$ to $Y$ nor from $Y$ to $X$. Consider a smooth irreducible curve $Y$ over $k$ without automorphisms (e.g $\PP^1_{k}$ minus sufficiently many points). Let $P_1,P_2$ be two distincts points of $Y$. Let $X_1$ (resp. $X_2$) be the curve obtained from $Y$ by creating a cusp at $P_1$ (resp. $P_2$) i.e associated to the module $2P_1$ (resp. $2P_2$) (see \cite{Se}). We have $Y=X_1^+=X_2^+$ and thus $X_1(k)$ is biregulously equivalent to $X_2(k)$. However, there is no morphism from $X_1$ to $X_2$ (nor from $X_2$ to $X_1$) because otherwise it would lift to a finite and birational morphism $Y\to Y$ sending $P_1$ to $P_2$ (see \cite[Ex. 5.5]{FMQ2}) and since $Y$ is normal then this morphism is an isomorphism, a contradiction.
\end{rem}

%%%%%%%%%%%%%%%%%%%%%%%%%%%%%%%%%%%%%%%%%
\section{Homeomorphisms between algebraic varieties}\label{sect-homeo}

In this section $k$ is an algebraically closed field and $\car (k)=0$. We fix a real closed field $R$ such that $R[\sqrt{-1}]=k$. 

\vskip 1cm

Given a morphism $\pi:Y\to X$ between algebraic varieties over $k$, we are looking for algebraic conditions on $\pi$ which are respectively equivalent to the topological property that $\pi_{k}$ is an homeomorphism for the $R$-topology and $\pi$ is an homeomorphism for the Zariski topology.

In case $\pi$ is finite then we already know the answer. Indeed,  from Theorem \ref{thmFB1} the two topological properties stated above are equivalent to each other, and moreover equivalent to the two algebraic conditions that $\pi$ is subintegral or $\pi$ is radicial. 
\begin{thm}\label{thmFB2} 
Let $\pi:Y\to X$ be a finite morphism between
algebraic varieties over $k$. The following
properties are equivalent:
\begin{enumerate}
\item $\pi_{k}$ is a homeomorphism for the $R$-topology.
\item $\pi$ is a homeomorphism.
\item $\pi$ is subintegral.
\item $\pi$ is radicial
\end{enumerate}
\end{thm}

In the sequel, we compare these four properties when we drop the finiteness hypothesis.

\vskip 2mm

After some generalities on the relation between homeomorphism with respect to Zariski topology, isomorphism and normality, we provide a complete solution to this problem
for $R$-homeomorphisms. As consequences, we give a partial answer to the problem for Zariski homeomorphisms and we provide statements that explain exactly when $R$-homeomorphisms and Zariski homeomorphisms are isomorphisms in the same spirit of Vitulli's result \cite[Thm. 2.4]{V2}.

\subsection{Bijection, birationality, homeomorphism}
We aim to compare the notions of bijection, birational morphism, homeomorphism with respect to Zariski topology at the spectrum level and homeomorphism with respect to Zariski topology at the level of closed points.

\vskip 2mm

To begin with, recall that it follows from the Nullstellensatz that the property for a morphism to be a homeomorphism with respect to the Zariski topology is already decided at the level of closed points. 
\begin{prop}
\label{NS}
Let $\pi:Y\to X$ be a morphism between algebraic varieties over $k$. Then $\pi$ is a homeomorphism if and only if $\pi_k$ is a homeomorphism with respect to the Zariski topology.
\end{prop}

Note that the previous result is still true if $\car(k)>0$.
It is clear that an homeomorphism induces a bijection at the level of $k$-rational points, the converse being false in general as illustrated by Example \ref{exVit2} below. 

Restricting our attention to curves, note that the converse holds true in the case of a morphism between irreducible algebraic curves. The irreducibility of the source space is crucial here, consider for instance the disjoint union of a point with a line minus a point, sent to a line. However even for morphisms between irreducible curves, a birational homeomorphism need not be an isomorphism as illustrated by the normalization of the cuspidal curve with equation $y^2=x^3$.

An important contribution to these questions is the fact that the bijectivity at the level of closed points induces the birationality for irreducible varieties, by Zariski Main Theorem. 

\begin{prop}\label{prop-wk} Let $X$ and $Y$ be irreducible varieties over $k$. Then a morphism from $Y$ to $X$ inducing a bijection at the level of $k$-rational points is quasi-finite and birational. If in addition $X$ is normal, it is an isomorphism.
\end{prop} 

The proof is classical, but we include it for the clarity of the exposition.

\begin{proof}
First note that $Y\to X$ is quasi-finite by \cite[Lem. 20.10]{STPmorph} and the Nullstellensatz.
By Grothendieck's form of Zariski Main Theorem, a quasi-finite morphism $\pi: Y\to  X$ between irreducible algebraic varieties over $k$ factorizes into an open immersion $Y\to Z$ and a finite morphism $Z\to X$. So we identify $Y$ with an open subset of $Z$ and further assume that $Y$ is Zariski dense in $Z$.

Assume $\pi_{k}:Y(k)\to X(k)$ bijective, so that $X$, $Y$ and $Z$ have the same dimension. Recall that the degree of the extension $\K(X)\to \K(Z)=\K(Y)$ is the cardinal of a generic fiber of $Z(k)\to X(k)$ by \cite[Thm. 7]{Sha}. Such a generic fiber is in general in $Y(k)$, otherwise the dimension of $\dim (Z\setminus Y)$ would be greater than or equal to $\dim X$, in contradiction with the density of $Y$.
Thus the finite morphism $Z\to X$ has necessarily degree one, so that $Z\to X$ is birational and thus also $\pi$. 

Assuming in addition $X$ normal implies that $Z$ is isomorphic to $X$. The open immersion is surjective at the level of $k$-rational points and from the Nullstellensatz it follows that it is surjective,
thus an isomorphism. 
\end{proof}

The following example shows that a morphism which gives a bijection at the level of $k$-rational points needs not be an homeomorphism.
\begin{ex} \label{exVit2} 
\begin{enumerate}
\item Consider the varieties of Example \ref{exVitdetail2}, for which we have an open immersion $\psi$ and a finite morphism $\phi$ as follows :
$$\pi : Y\times \Af_k^n \xrightarrow{\psi} X'\times \Af_k^n \xrightarrow{\phi} X\times\Af_k^n.$$ 
Even if $\pi_k$ is still bijective, the morphism $\pi$ is no longer a homeomorphism when $n>0$.

To see it, it suffices to consider the case $n=1$. 
%We see $Y\times \Af^1$ as an open of $X'\times \Af^1$ via the open immersion $\psi$, $\pi$ is just the restriction of $\phi$ to this open subset.
Denote by $O\in (X\times \Af_k^1)(k)$ the origin, and by $P=(0,1,0)$ and $Q=(0,-1,0)$ the two points in the fiber $\phi^{-1}(O)$, where the coordinates are $(x,z,t_1)$ in the notation of Example \ref{exVitdetail2}.
Let $C$ be the curve in $X'\times \Af_k^1$ given by intersection with the plane $x-z+t_1+1=0$ in $\Af_k^2\times\Af_k^1$.  Note that $P\in C$, $Q\notin C$, $C\setminus \{P\}$ is not a closed subset of $X'\times \Af_k^1$ but it is a closed subset of $Y\times \Af_k^1$ since $Y=X'\setminus\{P\}$. If $\pi$ was a homeomorphism, then $\pi_{k}(C(k)\setminus \{P\})$ should be Zariski closed in $(X\times \Af_k^1)(k)$ by Proposition \ref{NS}. However $O$ is in the closure of $\pi_{k}(C(k)\setminus\{P\})$.

\item This example is also interesting to consider relatively to Grothendieck's notion of universal homeomorphism \cite[Defn. 3.8.1]{Gr2}. Recall that a morphism $Y\to X$ is a universal homeomorphism if $Y\times_X Z\to Z$ is a homeomorphism for any morphism $Z\to X$.

The morphism $Y\to X$ of Example \ref{exVitdetail} is an homeomorphism but not a universal homeomorphism. Indeed, let $Z=X\times \Af_k^1$ and consider the base change $Z\to X$ given by the first projection. We have already checked that $Y\times_X Z=Y\times \Af_k^1\to Z=X\times \Af_k^1$ is not closed.
\end{enumerate}
\end{ex}

\subsection{Main results}

We focus now on the four properties appearing in Theorem \ref{thmFB2}, namely given a morphism $\pi:Y\to X$ between algebraic varieties over $k$, we consider the properties :\\
- $\pi_k$ is a homeomorphism for the $R$-topology,\\
- $\pi$ is a homeomorphism,\\
- $\pi$ is subintegral,\\
- $\pi$ is radicial.\\
 By Theorem \ref{thmFB2}, these four properties are equivalent when $\pi$ is finite. Considering the open immersion $\Af_{k}^1\to \PP_{k}^1$, it is radicial but not bijective. Since finite morphisms are surjective by Proposition \ref{lying-over} then if do not assume $\pi$ to be finite then we have to replace the last property above by:\\
 - $\pi$ is radicial and surjective.\\

The equivalence between the four above properties is no longer true without the finiteness hypothesis. In particular, an homeomorphism with respect to the Zariski topology need not be a homeomorphism with respect to the $R$-topology, even for irreducible affine curves. 

\begin{ex} \label{exVit3}
\begin{enumerate}
\item
Consider the morphism $\pi:Y\to X$ from Example \ref{exVitdetail}. We have already shown that $\pi$ is an homeomorphism, $\pi$ is radicial but $\pi$ is not subintegral since it is not an integral morphism.
The morphism $\pi_k$ is not a homeomorphism with respect to the $R$-topology. Indeed, consider a small open ball $B$ of $Y(k)$ containing the point that is sent to the singular point of $X(k)$ by $\pi_k$. Then the image of $Y(k)\setminus B$ is not closed. 
\item Let $n>0$ and consider the morphism $\pi_n: Y\times\Af_k^n\to X\times \Af_k^n$ from Example \ref{exVitdetail2}. We have already proved that $\pi$ is radicial and surjective, $ \pi$ is not subintegral nor an homeomorphism.
Note that $(\pi_n)_k$ is not a homeomorphism with respect to the $R$-topology either, following the same proof as in (1).
\end{enumerate}
\end{ex} 

In view of the foregoing explanation, the only equivalence which remains possible to obtain is between the properties for a morphism to be subintegral and an homeomorphism for the $R$-topology. It will be the subject of our main result.

The following two results measure the rigidity of the $R$-topology.

\begin{prop}\label{prop-fini} Let $\pi:Y\to X$ be a morphism between algebraic varieties over $k$. If $\pi_{k}$ is a homeomorphism with respect to the $R$-topology, then $\pi$ is finite.
\end{prop}

\begin{proof}
It is sufficient to assume that $Y$ and $X$ are irreducible. By Proposition \ref{prop-wk} then $\pi$ is quasi-finite.
By Grothendieck's form of Zariski Main Theorem, $\pi$ factorizes into an open immersion $g:Y\to Z$ and a finite morphism $h:Z\to X$. We consider $Y(k)$ embedded as an open subset of $Z(k)$ for the $R$-topology. We also assume $Y$ to be Zariski dense in $Z$, and thus $Y(k)$ is dense in $Z(k)$ for the $R$-topology by Lemma \ref{lem-dense}.

Since $\pi_{k}$ is bijective, the finite morphism $h$ is birational by Proposition \ref{prop-wk}. Moreover $h_{k}$ is surjective by Proposition \ref{lying-over}. Let us prove that $h_{k}$ is also injective. If not, there exist $y\in Y(k)$ and $z\in Z(k)\setminus Y(k)$ with $h_{k}(y)=h_{k}(z)$. Denote this point by $x\in X(k)$. Let $V_y$ be a closed neighborhood of $y$ in $Y(k)$ for the $R$-topology, and $V_z$ a closed neighborhood of $z$ in $Z(k)$ for the $R$-topology disjoint from $V_y$. Then $V_x=h_{k}(V_y)$ is a closed neighborhood of $x$ in $X(k)$ for the $R$-topology by assumption on $\pi_{k}$. 

By the Curve Selection Lemma \cite[Thm. 2.5.5]{BCR}, there is a continuous semialgebraic curve $\gamma :[0,1)\to Z(k)$ with $\gamma(0)=z$ and $\gamma(0,1)\subset V_z\cap Y(k)$. Then $h_{k}\circ \gamma :[0,1)\to X(k)$ is a continuous semialgebraic curve with $ h_{k} \circ \gamma(0)=x$, so it meets $V_x\setminus\{x\}=h_{k}(V_y\setminus \{y\})$. As a consequence $V_y$ and $V_z$ cannot be disjoint because $\pi_{k}$ is bijective.  

Therefore $h_{k}$ is bijective and thus $g_{k}$ is also bijective. The Nullstellensatz forces $g$ to be a bijective open immersion, thus an isomorphism. As a consequence $\pi$ is finite like $h$.
\end{proof}

\begin{cor}\label{cor-homeo} Let $\pi:Y\to X$ be a morphism between algebraic varieties over $k$. If $\pi_{k}$ is a homeomorphism with respect to the $R$-topology, then $\pi$ is a homeomorphism.
\end{cor}

\begin{proof}
The finiteness follows from Proposition \ref{prop-fini}. Being a bijection on the closed points, it is a homeomorphism by Theorem \ref{thmFB1}.
\end{proof}

Corollary \ref{cor-homeo} admits a converse, for varieties of dimension at least two.

\begin{prop} \label{equivhomeo} Let $\pi:Y\to X$ be a morphism between irreducible algebraic varieties over $k$ of dimension at least two. If $\pi$ is a homeomorphism, then $\pi_{k}$ is a homeomorphism with respect to the $R$-topology.
\end{prop}

\begin{proof}
By \cite[Theorem 2.2]{V2}, the morphism $\pi$ is finite (it is also birational by Proposition \ref{prop-wk}), so we conclude using Theorem \ref{thmFB1}.
\end{proof}

Proposition \ref{equivhomeo} is however not true for curves as illustrated by Example \ref{exVit3} (1).

\vskip 1cm

We are now able to get the main result of the paper. We prove that, for a given morphism between algebraic varieties over $k$, the topological property to be a homeomorphism for the $R$-topology does not depend of the chosen real closed field since it is equivalent to the algebraic property to be subintegral.
\begin{thm}
\label{Euclhomeo}
Let $\pi:Y\to X$ be a morphism between algebraic varieties over $k$. The following properties are equivalent:
\begin{enumerate}
\item $\pi_{k}$ is a homeomorphism with respect to the $R$-topology.
\item $\pi$ is subintegral.
\item $X_Y^+=Y$.
\end{enumerate}
\end{thm}

\begin{proof}
From Theorem \ref{thmFB2} and Proposition \ref{propCSEPvariety} we know that (2) and (3) are equivalent and they imply (1).

Assume $\pi_{k}$ is a homeomorphism with respect to the $R$-topology. Then $\pi$ is finite by Proposition \ref{prop-fini} and thus $\pi$ is subintegral again by Theorem \ref{thmFB2}.
\end{proof}

Note that we cannot replace in Theorem \ref{Euclhomeo} the topological assumption (1) on $\pi_{k}$ by $\pi_{k}$ is bijective or even $\pi$ or $\pi_k$ is a homeomorphism, as illustrated by Examples \ref{exVitdetail} and \ref{exVit3}.

\smallskip

As illustrated by Proposition \ref{prop-wk}, the normality of the target space plays a role to upgrade a bijection into an isomorphism. Next result, which is a direct consequence of Theorem \ref{Euclhomeo}, shows that relative seminormality is the correct notion to associate to a homeomorphism with respect to the $R$-topology in order to obtain an isomorphism. 

\begin{cor}
\label{corEuclhomeo1}
Let $\pi:Y\to X$ be a morphism between algebraic varieties over $k$ such that $\pi_{k}$ is a homeomorphism with respect to the $R$-topology. Then $\pi$ is an isomorphism if and only if $X$ is seminormal in $Y$.
\end{cor}

We obtain an alternative version of \cite[Thm. 2.4]{V2}, where we replace the Zariski topology by the $R$-topology. In particular our statement is valid without any restriction on dimension.

\begin{cor} 
\label{corEuclhomeo2} Let $\pi:Y\to X$ be a morphism between algebraic varieties over $k$. If $\pi_{k}$ is a homeomorphism with respect to the $R$-topology and $X$ is seminormal, then $\pi$ is an isomorphism.
\end{cor}

\begin{proof}
From Proposition \ref{semimprel} then $X$ is seminormal in $Y$ and the result follows from Corollary \ref{corEuclhomeo1}.
\end{proof}

We end the section by some results with a slightly different flavour. Forgetting about the $R$-topology, we compare now homeomorphisms with radicial and sujective morphisms.
So we compare the two properties quoted at the beginning of the section, properties that do not appear in the equivalence of Theorem \ref{Euclhomeo}.
We may also wonder what is the correct assumption to add to an homeomorphism in order to obtain an isomorphism in the spirit of Corollary \ref{corEuclhomeo1}.
We start by proving that an homeomorphism between varieties over $k$ is radicial.
 \begin{prop}
\label{Zhomeo}
Let $\pi:Y\to X$ be a morphism between algebraic varieties over $k$. If $\pi$ is a homeomorphism then $\pi$ is radicial.
\end{prop}

\begin{proof}
We may assume $X$ and $Y$ irreducible, since the irreducible components of $X$ and $Y$ are homeomorphic one-by-one.

Assume first that the dimension of $X$ and $Y$ is at least two. Then $\pi_{k}$ is a homeomorphism with respect to the $R$-topology by Proposition \ref{equivhomeo}, so $\pi$ is finite by Proposition \ref{prop-fini}. By Theorem \ref{Euclhomeo} we get that $\pi$ is subintegral and thus radicial (Proposition \ref{sat=semivar}).

Assume $X$ and $Y$ are curves. Then $X$ and $Y$ are birational by Proposition \ref{prop-wk}. Since moreover $\pi_{k}$ is bijective then $\pi$ is bijective and equiresidual. By definition $\pi$ is radicial.
\end{proof}

Contrary to Theorem \ref{Euclhomeo}, the converse implication in Proposition \ref{Zhomeo} is not valid as Examples \ref{exVit2} (2) and  \ref{exVit3} (2) show. Anyway, we get the analogue of Corollary \ref{corEuclhomeo1} with respect to the Zariski topology. It gives a generalization of \cite[Thm. 2.4]{V2} in any dimension for varieties over $k$.

 \begin{cor}
\label{corZhomeo}
Let $\pi:Y\to X$ be a morphism between algebraic varieties over $k$ such that $\pi$ is a homeomorphism. Then $\pi$ is an isomorphism if and only if $X$ is saturated in $Y$.
\end{cor}

\begin{proof}
Assume $X$ is saturated in $Y$, the converse implication being trivial. By Proposition \ref{Zhomeo} then $\pi$ is radicial. Then $Y\stackrel{Id}{\to} Y\stackrel{\pi}{\to} X$ is a radicial sequence of morphisms.
We conclude that $\pi$ is an isomorphism by Proposition \ref{PU1saturationvar}.
\end{proof}

We get the analogue of Corollary \ref{corEuclhomeo2} with respect to the Zariski topology only in dimension $\geq 2$. It is a kind of reformulation of  \cite[Theorem 2.4]{V2} in characteristic zero.
\begin{cor} 
\label{corZhomeo2} Let $\pi:Y\to X$ be a morphism between irreducible algebraic varieties over $k$ of dimension at least $2$. If $\pi$ is a homeomorphism and $X$ is saturated, then $\pi$ is an isomorphism.
\end{cor}

\begin{proof}
From Propositions \ref{prop-fini}, \ref{equivhomeo} and \ref{Zhomeo} then $\pi$ is radicial and finite. It follows from Proposition \ref{sat=semivar} that $\pi$ is subintegral and $X$ is seminormal.
We conclude $\pi$ is an isomorphism from Corollary \ref{corEuclhomeo1}.
\end{proof}

\begin{rem} Example \ref{exVitdetail} shows that Corollary \ref{corZhomeo2} is false for curves.
\end{rem}

Note that, even though the statements of Proposition \ref{Zhomeo} and Corollaries \ref{corZhomeo}, \ref{corZhomeo2} does not mention the $R$-topology, it plays a crucial role in our proofs.

\section{Homeomorphisms versus isomorphisms in positive characteristic} \label{sect-carpos}

In this section $k$ is an algebraically closed field and $\car (k)=p>0$. 

\vskip 2mm

One can wonder which statements of the previous section can be generalized in positive characteristic. Of course the $R$-topology no longer exists and therefore one can only try to obtain versions of Proposition \ref{Zhomeo} and Corollaries \ref{corZhomeo}, \ref{corZhomeo2}. Note also that Proposition \ref{prop-wk} is no longer valid : indeed, the Frobenious map $\varphi:\Af_k^n\to\Af_k^n$ defined by $\varphi(x_1,\ldots,x_n)=(x_1^p,\ldots,x_n^p)$ is a homeomorphism but is not birational.

We start by giving a version of Proposition \ref{Zhomeo} in positive characteristic.
\begin{prop}
\label{Zhomeocarpos}
Let $\pi:Y\to X$ be a morphism between algebraic varieties over $k$. If $\pi$ is a homeomorphism then $\pi$ is radicial.
\end{prop}

\begin{proof}
Assume $\pi:Y\to X$ is a homeomorphism. We may assume $X$ and $Y$ irreducible, since the irreducible components of $X$ and $Y$ are homeomorphic one-by-one.

Assume first that the dimension of $X$ and $Y$ is at least two. By \cite[Thm. 2.2]{V} then $\pi$ is finite. It is clear that $\pi$ is bijective. Proceeding similarly to Bernard's proof of \cite[Thm. 3.1]{Be} then for any $y\in Y$ we get that the algebraic field extension $k(\pi(y))\to k(y)$ has degree equal to one. It follows that $\pi$ is residually purely inseparable. So $\pi$  is weakly subintegral and thus radicial (Proposition \ref{sat=semivar}).

Assume $X$ and $Y$ are curves. We already know that $\pi$ is bijective. Let $y\in Y(k)$ then since we get an extension $k(\pi(y))\to k(y)=k$ and since $k(\pi(y)$ contains $k$ then $\pi(y)\in X(k)$. Let $x\in X(k)$, by bijectivity of $\pi$ then there exists $y\in Y$ such that $\pi(y)=x$. By Zariski's Lemma then $k(x)=k\to k(y)$ is finite and thus $y\in Y(k)$. We have proved $\pi_k$ is bijective. We adapt now in our situation the beginning of the proof of Proposition \ref{prop-wk}. First note that $Y\to X$ is quasi-finite by \cite[Lem. 20.10]{STPmorph} and the Nullstellensatz.
By Grothendieck's form of Zariski Main Theorem, a quasi-finite morphism $\pi: Y\to  X$ between irreducible algebraic varieties over $k$ factorizes into an open immersion $Y\to Z$ and a finite morphism $Z\to X$. So we identify $Y$ with an open subset of $Z$ and further assume that $Y$ is Zariski dense in $Z$. It follows that $Y\to Z$ is birational.
Since $\pi_{k}:Y(k)\to X(k)$ is bijective, so that $X$, $Y$ and $Z$ have the same dimension. 
Recall that the degree of the extension $\K(X)\to \K(Z)=\K(Y)$ is the cardinal of a generic fiber of $Z(k)\to X(k)$ by \cite[Thm. 7]{Sha}. Such a generic fiber is in general in $Y(k)$, otherwise the dimension of $\dim (Z\setminus Y)$ would be $\geq \dim X$, in contradiction with the density of $Y$.
Since $\pi_{k}:Y(k)\to X(k)$ is bijective then the finite morphism $Z\to X$ has necessarily degree one, so that $\K(X)\to \K(Z)$ is purely inseparable. It follows that $\pi$ is bijective and residually purely inseparable and thus radicial and surjective.
\end{proof}

We end the paper by versions of Corollaries \ref{corZhomeo}, \ref{corZhomeo2} in positive characteristic.

\begin{cor}
\label{corZhomeocarpos}
Let $\pi:Y\to X$ be a morphism between algebraic varieties over $k$ such that $\pi$ is a homeomorphism. Then $\pi$ is an isomorphism if and only if $X$ is saturated in $Y$.
\end{cor}

\begin{proof}
We copy the proof of Corollary \ref{corZhomeo} using Proposition \ref{Zhomeocarpos} instead of Proposition \ref{Zhomeo}.
\end{proof}

\begin{cor} 
\label{corZhomeo2carpos} Let $\pi:Y\to X$ be a morphism between irreducible algebraic varieties over $k$ of dimension $\geq 2$. If $\pi$ is a homeomorphism and $X$ is saturated, then $\pi$ is an isomorphism.
\end{cor}

\begin{proof}
We copy the proof of Corollary \ref{corZhomeo2} using Proposition \ref{Zhomeocarpos},  \cite[Thm. 2.2]{V} and Corollary \ref{corZhomeocarpos} instead respectively of Propositions \ref{Zhomeo}, \ref{prop-fini} and Corollary \ref{corZhomeo}.
\end{proof}

\end{document}